\newtheorem{prethm}{{\bf Theorem}}
\newenvironment{thm}{\begin{prethm}{\hspace{-0.5
em}{\bf.}}}{\end{prethm}}
\newtheorem{precor}{{\bf Corollary}}
\newenvironment{cor}{\begin{precor}{\hspace{-0.5
em}{\bf.}}}{\end{precor}}
\newtheorem{preprop}{{\bf Proposition}}
\newtheorem{preque}{{\bf Question}}
\newtheorem{preques}{{\bf Question}}
\newtheorem{prelemma}{{\bf Lemma}}
\newenvironment{lemma}{\begin{prelemma}{\hspace{-0.5
em}{\bf.}}}{\end{prelemma}}
\newtheorem{prefact}{{\bf Fact}}
\newtheorem{preobs}{{\bf Observation}}
\newenvironment{obs}{\begin{preobs}{\hspace{-0.5
em}{\bf.}}}{\end{preobs}}
\newtheorem{prefig}{{\bf Figure}}
\newtheorem{prelemm}{{\bf Lemma}}
\newtheorem{preex}{{\bf Example}}
\newtheorem{prepro}{{\bf Proposition}}
\newenvironment{pro}{\begin{prepro}{\hspace{-0.5
em}{\bf.}}}{\end{prepro}}
\newtheorem{prelem}{{\bf Theorem}}
\newenvironment{lem}{\begin{prelem}{\hspace{-0.5
em}{\bf.}}}{\end{prelem}}
\newtheorem{preproof}{{\bf Proof.}}
\newenvironment{proof}[1]{\begin{preproof}{\rm
               #1}\hfill{$\rule{2mm}{2mm}$}}{\end{preproof}}
\newtheorem{preconj}{{\bf Conjecture}}
\newtheorem{predeff}{{\bf Definition}}
\newenvironment{deff}{\begin{predeff}{\hspace{-0.5
em}{\bf.}}}{\end{predeff}}
\def\newpic#1{}
\date{}
\begin{document}

\title{
{\Large{\bf The Metric Dimension of Lexicographic Product of
Graphs}}}
%\\ cncm23.tex }}}
%

{\small
\author{
{\sc Mohsen Jannesari} and {\sc Behnaz Omoomi  }\\
[1mm]
{\small \it  Department of Mathematical Sciences}\\
{\small \it  Isfahan University of Technology} \\
{\small \it 84156-83111, Isfahan, Iran}}

%@@@@@@@@@@@@@@@@@@@@@@@@@@@@@@@@@@@@@@@@@@@@@@@@@@@@@@

%\subjclass[2000]{05C15.}

%\keywords{ $b$-chromatic number,  $b$-coloring, dominating coloring.\\

%\footnotetext[1]{The research  is supported by Isfahan University
%of Technology.}
 \maketitle \baselineskip15truept

\begin{abstract}
For an ordered set $W=\{w_1,w_2,\ldots,w_k\}$ of vertices and a
vertex $v$ in a connected graph $G$, the ordered $k$-vector
$r(v|W):=(d(v,w_1),d(v,w_2),\ldots,d(v,w_k))$ is  called  the
(metric) representation of $v$ with respect to $W$, where $d(x,y)$
is the distance between the vertices $x$ and $y$. The set $W$ is
called  a resolving set for $G$ if distinct vertices of $G$ have
distinct representations with respect to $W$. The minimum
cardinality of a resolving set for $G$ is its metric dimension. In
this paper, we study the metric dimension of the lexicographic
product  of graphs $G$ and $H$, $G[H]$. First, we introduce a new
parameter which is called adjacency metric dimension of a graph.
Then, we obtain the metric dimension of $G[H]$ in terms of the
order of $G$ and the adjacency metric dimension of $H$.
\end{abstract}

{\bf Keywords:} Lexicographic product; Resolving set; Metric
dimension; Basis; Adjacency metric dimension.
%%%%%%%%%%%%%%%%%%%%%%%%%%%%%%%%%%%%%%%%%%%%%%%%%%%%%%%%%%%%%%%%%%%%%%%%%%%%%%%%%%%%
%%%%%%%%%%%%%%%%%%%%%%%%%%%%%%%%%%%%%%%%%%%%%%%%%%%%%%%%%%%%%%%%%%%%%%%%%%%%%%%%%%%%%%%
\section{Introduction}
In this section, we present some definitions and known results
which are necessary  to prove our main theorems. Throughout this
paper, $G=(V,E)$ is a finite simple graph. We use $\overline G$
for the complement of  graph  $G$. The distance between two
vertices $u$ and $v$, denoted by $d_G(u,v)$, is the length of a
shortest path between $u$ and $v$ in $G$. Also, $N_G(v)$ is the
set of all neighbors of vertex $v$ in $G$. We write these simply
$d(u,v)$ and $N(v)$,  when no confusion can arise. The notations
$u\sim v$ and $u\nsim v$ denote the adjacency and none-adjacency
relation between $u$ and $v$, respectively. The symbols
$(v_1,v_2,\ldots, v_n)$ and $(v_1,v_2,\ldots,v_n,v_1)$ represent
a path of order $n$, $P_n$, and a cycle of order $n$, $C_n$,
respectively.

%The vertices of a connected graph can be represented by different
%ways,  for example,   the vectors which theirs components are the
%distance between the vertex and the vertices in a given subset of
%vertices.

For an ordered set $W=\{w_1,w_2,\ldots,w_k\}\subseteq V(G)$ and a
vertex $v$ of $G$, the  $k$-vector
$$r(v|W):=(d(v,w_1),d(v,w_2),\ldots,d(v,w_k))$$
is called  the ({\it metric}) {\it representation}  of $v$ with
respect to $W$. The set $W$ is called a {\it resolving set} for
$G$ if distinct vertices have different representations. In this
case, we say  set $W$ resolves $G$. Elements in a resolving set
are called {\it landmarks}. A resolving set $W$ for $G$ with
minimum cardinality is  called a {\it basis} of $G$, and its
cardinality is the {\it metric dimension} of $G$, denoted by
$\beta(G)$.  The concept of (metric) representation is introduced
by Slater~\cite{Slater1975} (see~\cite{Harary}). For more results
related to these concepts
see~\cite{trees,bounds,sur1,Discrepancies}.
\par We say an ordered set $W$ {\it resolves} a set $T$ of vertices
in $G$, if the representations of vertices in $T$ are distinct
with respect to $W$. When $W=\{x\}$, we say that  vertex $x$
resolves $T$. To see that whether a given set $W$ is a resolving
set for $G$, it is sufficient to look at the representations of
vertices in $V(G)\backslash W$, because $w\in W$ is the unique
vertex of $G$ for which $d(w,w)=0$.
\par Two distinct vertices $u,v$ are said  {\it twins}
if $N(v)\backslash\{u\}=N(u)\backslash\{v\}$. It is called that
$u\equiv v$ if and only if $u=v$ or $u,v$ are twins.
In~\cite{extermal}, it is proved that ``$\equiv$" is an equivalent
relation. The equivalence class of vertex $v$ is denoted by
$v^*$. Hernando et al.~\cite{extermal} proved that $v^*$ is a
clique or an independent set in $G$. As in ~\cite{extermal}, we
say $v^*$ is of type (1), (K), or (N) if $v^*$ is a class of size
$1$, a clique of size at least $2$, or an independent set of size
at least $2$. We denote the number of equivalence classes of $G$
with respect to ``$\equiv$" by $\iota(G)$. We mean by
$\iota_{_K}(G)$ and $\iota_{_N}(G)$, the number of classes of
type (K) and type (N) in $G$, respectively. We also use $a(G)$ and
$b(G)$ for the number of all vertices in  $G$ which have at least
an adjacent twin and a none-adjacent twin vertex in $G$,
respectively.  On the other way, $a(G)$ is the number of all
vertices in the classes of type (K) and $b(G)$ is the number of
all vertices in the classes of type (N). Clearly,
$\iota(G)=n(G)-a(G)-b(G)+\iota_{_N}(G)+\iota_{_K}(G)$.
\begin{obs}~{\rm\cite{extermal}}\label{twins}
Suppose that $u,v$ are twins in a  graph $G$ and $W$ resolves $G$.
Then $u$ or $v$ is in $W$. Moreover, if $u\in W$ and $v\notin W$,
then $(W\setminus\{u\})\cup \{v\}$ also resolves $G$.
\end{obs}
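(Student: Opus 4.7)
The plan is to base the proof on the observation that swapping two twin vertices is a graph automorphism. If $u,v$ are twins in $G$, the map $\phi\colon V(G)\to V(G)$ that exchanges $u$ and $v$ and fixes every other vertex is a bijection which, by the defining condition $N(u)\setminus\{v\}=N(v)\setminus\{u\}$, sends edges to edges; the only non-trivial check is for edges incident to $u$ or $v$, and this is immediate from the twin condition (together with the symmetric fact that $uv$ is an edge iff $vu$ is). Since automorphisms preserve distances, this yields the key identity $d(u,w)=d(v,w)$ for every $w\in V(G)\setminus\{u,v\}$.

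From here the first assertion is immediate: if neither $u$ nor $v$ lies in $W$, then every $w\in W$ is outside $\{u,v\}$, so $d(u,w)=d(v,w)$ for all $w\in W$, giving $r(u|W)=r(v|W)$, contradicting that $W$ resolves $G$. Hence $W$ must contain at least one of $u,v$.

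For the second assertion, suppose $u\in W$, $v\notin W$, and set $W':=(W\setminus\{u\})\cup\{v\}$. Using the automorphism $\phi$, I plan to establish the identity $r(x|W')=r(\phi(x)|W)$ for every $x\in V(G)$. The verification is a direct substitution: since $\phi$ is an isometry that fixes every element of $W\setminus\{u\}$ and sends $v$ to $u$, the entry of $r(x|W')$ indexed by $w\in W\setminus\{u\}$ equals $d(\phi(x),\phi(w))=d(\phi(x),w)$, while the entry indexed by $v$ equals $d(\phi(x),\phi(v))=d(\phi(x),u)$. Once this identity is in hand, suppose for contradiction that $r(x|W')=r(y|W')$ for some distinct $x,y$; then $r(\phi(x)|W)=r(\phi(y)|W)$ with $\phi(x)\neq\phi(y)$, contradicting that $W$ resolves $G$. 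Thus $W'$ resolves $G$.

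The only step that requires genuine care is the verification that $\phi$ is an automorphism, since the whole argument collapses without it. However, this is essentially a direct translation of the twin condition into the language of edge preservation (treating separately the type (K) and type (N) cases if one prefers an elementary check), so no serious obstacle is expected.
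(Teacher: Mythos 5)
Your proof is correct. Note that the paper itself gives no proof of this observation; it is simply quoted from Hernando et al.~\cite{extermal}, so there is nothing to compare line by line. Your automorphism argument is a clean packaging of the standard one: the usual route is to show directly that twins satisfy $d(u,x)=d(v,x)$ for all $x\notin\{u,v\}$ (a shortest $u$--$x$ path starting at a neighbor $w_1\neq v$ transplants to a $v$--$x$ walk of the same length, and the case $w_1=v$ is ruled out by symmetry), from which both claims follow exactly as in your second and third paragraphs. Your observation that the transposition $\phi$ of $u$ and $v$ is an automorphism subsumes that distance identity and also makes the exchange property transparent via $r(x|W')=r(\phi(x)|W)$; the verification that $\phi$ preserves edges is indeed immediate from $N(u)\setminus\{v\}=N(v)\setminus\{u\}$ together with the symmetry of the edge $uv$ itself. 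Both routes are elementary and essentially equivalent in content; yours trades a small amount of setup (checking the automorphism) for a slicker treatment of the replacement step.
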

\begin{lem}~\rm\cite{Ollerman}\label{B=1,B=n-1}
Let $G$ be a connected graph of order $n$.
Then,\begin{description}\item (i) $\beta(G)=1$ if and only if
$G=P_n$,\item (ii) $\beta(G)=n-1$ if and only if $G=K_n$.
\end{description}
\end{lem}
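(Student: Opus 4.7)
The plan is to prove each biconditional independently, handling the forward and converse directions separately; the converse of (ii) is the main substantive step.

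For (i), the forward direction $G=P_n \Rightarrow \beta(G)=1$ is immediate: an endpoint $w$ of the path satisfies $d(w,v)\in\{0,1,\ldots,n-1\}$ with all values distinct over $v\in V(G)$. For the converse, given a basis $W=\{w\}$, the $n$ distances $d(w,v)$ are distinct and lie in $\{0,1,\ldots,n-1\}$, so every integer in that range occurs exactly once. Label the unique vertex at distance $i$ from $w$ as $v_i$. Since $G$ is connected, each $v_i$ with $i\ge 1$ has a neighbor at distance $i-1$, which must be the unique such vertex $v_{i-1}$. Thus $v_0v_1\cdots v_{n-1}$ is a spanning path in $G$; any extra edge $v_iv_j$ with $j\ge i+2$ would yield $d(w,v_j)\le i+1<j$, contradicting the uniqueness of $v_j$ at distance $j$. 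Hence $G=P_n$.

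For (ii), the forward direction is short: every pair of vertices in $K_n$ is a pair of adjacent twins, so Observation~\ref{twins} prevents any resolving set from omitting two vertices, giving $\beta(K_n)\ge n-1$; and clearly $V(K_n)\setminus\{v\}$ resolves $K_n$ via the self-distance zero. The substantive step is the converse: assuming $G$ is connected and $G\neq K_n$, I will exhibit a resolving set of size $n-2$. The approach is to pick two vertices $u,v$ and set $W:=V(G)\setminus\{u,v\}$; the key observation is that $W$ resolves $G$ if and only if some $w\in W$ distinguishes $u$ from $v$, because each element of $W$ has a zero in its own coordinate, so no other conflict can occur.

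So the task reduces to finding distinct $u,v\in V(G)$ and some $w\in V(G)\setminus\{u,v\}$ with $d(u,w)\neq d(v,w)$. A short case analysis shows that if no such third vertex exists for the pair $\{u,v\}$ (considering separately $w\in N(u)$ and $w\in N(v)$), then $N(u)\setminus\{v\}=N(v)\setminus\{u\}$, i.e., $u$ and $v$ are twins. Therefore, if no admissible pair $\{u,v\}$ existed at all, every pair of vertices of $G$ would be twins, collapsing $V(G)$ into a single equivalence class; by the result of Hernando et al.\ cited in the paper, this class is a clique or an independent set, forcing $G=K_n$ or $G=\overline{K_n}$. Since $G$ is connected and $n\ge 2$, the second is excluded, contradicting $G\neq K_n$.

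The main obstacle is the converse of (ii), specifically the step that turns ``no distinguishing third vertex exists for any pair'' into ``every pair of vertices is a twin pair''; once this local-to-global collapse is established, connectedness finishes the argument cleanly.
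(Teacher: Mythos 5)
Your proof is correct. Note that the paper does not prove this statement at all: it is quoted as Theorem~A from the reference of Chartrand, Eroh, Johnson and Oellermann, so there is no internal proof to compare against. Your argument for (i) is the standard one (a single landmark forces the distances $0,1,\dots,n-1$ to each occur exactly once, and connectivity plus the no-chord observation forces a path), and your converse of (ii) — reducing ``$V(G)\setminus\{u,v\}$ resolves $G$'' to finding one third vertex separating $u$ from $v$, and showing that the failure of this for every pair makes all pairs twins, whence $G$ is a clique or empty — is a clean, self-contained route that fits naturally with the twin machinery (Observation~1 and the clique/independent-set property of twin classes) that the paper already sets up for other purposes.
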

\begin{lem}~\rm\cite{K dimensional,idea}\label{B(P_n) B(C_N)}
\begin{description}
\item (i) If $n\notin\{3,6\}$, then $\beta(C_n\vee
K_1)=\lfloor{{2n+2}\over 5}\rfloor$, \item (ii)  If
$n\notin\{1,2,3,6\}$, then $\beta(P_n\vee K_1)=\lfloor{{2n+2}\over
5}\rfloor$.
\end{description}
\end{lem}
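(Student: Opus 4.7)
The plan is to reduce both parts to a unified analysis and then handle the minor differences. Both $C_n\vee K_1$ and $P_n\vee K_1$ have diameter $2$, so every metric representation lies in $\{0,1,2\}^k$, and the apex vertex $h$ is at distance~$1$ from every other vertex, so $h$ contributes a constant $1$-coordinate to every non-landmark and may be assumed not to lie in a basis. After this reduction, for $W\subseteq V(C_n)$ (resp.\ $V(P_n)$) and a non-landmark cycle/path vertex $v$, $d(v,w)$ is $1$ or $2$ according as $vw$ is an edge of $C_n$/$P_n$ or not; consequently $W$ resolves the graph iff the sets $N(v)\cap W$ are pairwise distinct over non-landmarks $v$ \emph{and} no non-landmark is adjacent to every member of $W$ (otherwise it would collide with the all-$1$'s representation of $h$).

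Next, record the gap structure on the cycle: list the landmarks cyclically and let $g_1,\dots,g_k$ be the sizes of the arcs of non-landmarks between consecutive landmarks, so $\sum g_i=n-k$. Three structural restrictions drive the lower bound. (a)~$g_i\le 3$: a gap of size $\ge 4$ contains two non-landmarks adjacent to no landmark, both with representation $(2,\dots,2)$. (b)~At most one $g_i$ equals $3$: the middle vertex of a $3$-gap also has the all-$2$'s representation, and two such vertices collide. (c)~No two consecutive $g_i$'s are both $\ge 2$: the two non-landmark neighbors of the landmark separating them would each be adjacent to just that landmark, producing a duplicate representation. Together these yield
\[
n-k=\sum g_i\;\le\;\Bigl\lfloor\tfrac{3k}{2}\Bigr\rfloor+1,
\]
which rearranges to $k\ge\lceil(2n-2)/5\rceil=\lfloor(2n+2)/5\rfloor$ whenever $k\ge 3$. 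The small cases $k\le 2$ are inspected directly and, combined with the hub-collision restriction (which forbids any $1$-gap when $k=2$), are precisely what forces the exceptions $n\in\{3,6\}$ in~(i) and $n\in\{1,2,3,6\}$ in~(ii).

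For the upper bound I would exhibit an explicit landmark placement with gap pattern $(2,1,2,1,\ldots)$, which packs five cycle vertices per two landmarks. Writing $n=5q+r$ with $r\in\{0,1,2,3,4\}$, one adjusts the final block by inserting one extra gap of size $0$, $1$, or $3$ (the last being the allowed single $3$-gap) so that the gap sizes sum to $n-k$ with $k=\lfloor(2n+2)/5\rfloor$; a short case-by-case verification confirms that (a)--(c) and the hub condition remain satisfied. For the fan (case~(ii)), the endpoints of $P_n$ yield two one-sided ``half-gaps'' instead of cyclic arcs; each half-gap is at most $2$ (else it produces two all-$2$ vertices), while the absence of wrap-around actually relaxes constraint~(c) at the boundary. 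The same counting gives the same bound, and the same pattern, shifted to start at $v_1$, achieves it.

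The principal technical obstacle is the residue-by-residue check for the construction: one must verify, for each $n\bmod 5$, that the corrective adjustment does not secretly create two consecutive $\ge 2$-gaps, a second all-$2$ vertex, or a hub-collision. The excluded values then emerge as exactly the combinations of small $n$ and small $k$ in which no valid adjustment exists; for instance $C_3\vee K_1=K_4$ is handled directly by Lemma~\ref{B=1,B=n-1}(ii), and the $n=6$ wheel is eliminated by the extra $k=2$ obstruction described above.
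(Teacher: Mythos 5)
The first thing to note is that the paper itself offers no proof of this statement: it is imported verbatim from the literature (the environment is a ``Theorem A/B''--style citation of \cite{K dimensional,idea}), so there is no internal argument to compare yours against. Judged on its own, your argument follows the standard route of those references and is sound in substance. The reduction is legitimate: the apex $h$ may be assumed outside a basis (this is exactly what Lemma~\ref{delta=n-1} of the paper provides, so you may cite it rather than assert it), the diameter-$2$ observation turns resolving into the combinatorics of the sets $N(v)\cap W$ together with the extra requirement that no non-landmark be adjacent to all of $W$, and your three gap constraints are each correct and, for $k\ge 3$, jointly equivalent to $W$ being a resolving set (the possible representations are: all-$2$, a single $1$ at one landmark, or two $1$'s at a consecutive landmark pair, and (a)--(c) are precisely the conditions preventing a repeat in each class). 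The resulting inequality $n-k\le\lfloor 3k/2\rfloor+1$ and the identity $\lceil (2n-2)/5\rceil=\lfloor(2n+2)/5\rfloor$ give the lower bound, the gap bounds force $k\ge3$ once $n\ge7$, and the $k\le2$ analysis is exactly what produces the exceptional values.

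Two points in the fan case need repair, though both are of the kind your deferred ``residue-by-residue check'' would surface. First, the claim that the absence of wrap-around \emph{relaxes} constraint (c) at the boundary is backwards: the endpoint of $P_n$ lying in a nonempty half-gap is adjacent to at most one landmark, so a half-gap of size $\ge1$ already manufactures a ``single-$1$'' vertex at the first landmark, just as an internal gap of size $\ge2$ does; hence a nonzero half-gap adjacent to an internal gap of size $\ge2$ creates a collision. (The count still lands on the same formula, because a path of $k+1$ gap-slots admits one more alternating big slot than a cycle of $k$ slots while the two boundary bigs each contribute less.) Second, the construction ``the same pattern, shifted to start at $v_1$'' fails literally for some residues: for $n=10$ the placement $v_1,v_4,v_6,v_9$ realizes the pattern $(2,1,2,1)$ but makes $v_8$ and $v_{10}$ both adjacent only to $v_9$, a collision; one must redistribute the gaps (e.g.\ $v_2,v_4,v_7,v_9$ with gap sequence $1,1,2,1,1$ works). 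With these boundary corrections folded into the verification step, the proof is complete and correct.
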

The metric dimension of cartesian product of graphs is studied by
Caseres et al. in ~\cite{cartesian product}. They  obtained the
metric dimension of cartesian product of graphs $G$ and $H$,
$G\square H$, where $G,H\in \{P_n,C_n,K_n\}$.

%When determining whether a given set $W$ of vertices of a graph
%$G$ resolves $G$, it is suffices to check the representations of
%vertices in $V(G)\backslash W$ because $w\in W$ is the only
%vertex of $G$ for which $d(w,w)=0$.
% It is obvious that for every graph of order $n$, $1\leq \beta(G)\leq
% n-1$. Khuller et al.~\cite{landmarks} and Chartrand et al.~\cite{Ollerman}
%  independently proved that $\beta(G)=1$ if and only
%if $G$ is a path. Also,  Chartrand et al. ~\cite{Ollerman} proved
%that the only graph of order $n$, $n\geq 2$,  with metric
%dimension $n-1$ is the complete graph $K_n$.

\par The {\it lexicographic product} of graphs $G$ and $H$,  denoted by
$G[H]$, is a graph with vertex set \linebreak $V(G)\times
V(H):=\{(v,u)~|~v\in V(G),u\in V(H)\}$, where  two vertices
$(v,u)$ and $(v',u')$ are  adjacent whenever,  $v\sim v'$, or
$v=v'$ and $u\sim u'$. When the order of $G$ is at least $2$, it
is easy to see that $G[H]$ is a connected graph if and only if
$G$ is a connected graph.

This paper is aimed to investigate the metric dimension of
lexicographic product of graphs.
%In Section~\ref{Preliminaries},
%we present some definitions and known results which are necessary
%to prove the main results.
The main goal of Section~\ref{Adjacency Resolving Sets} is
introducing  a new parameter, which  we call it adjacency metric
dimension. In Section~\ref{Lexicographic product},  we prove some
relations to determine the  metric dimension of lexicographic
product of graphs, $G[H]$, in terms of the order of $G$ and the
adjacency metric dimension of $H$. As a corollary of our main
theorems, we obtain the exact value of the metric dimension of
$G[H]$, where $G=C_n(n\geq 5)$ or $G=P_n (n\geq4)$, and
$H\in\{P_m,C_m,\overline P_m,\overline
C_m,K_{m_1,\ldots,m_t},\overline K_{m_1,\ldots,m_t}\}$.

%%%%%%%%%%%%%%%%%%%%%%%%%%%%%%%%%%%%%%%%%%%%%%%%%%%%%%%%%%%%%%%%%%%%%%%%%%%%%%%%%%%%%%%%%
%%%%%%%%%%%%%%%%%%%%%%%%%%%%%%%%%%%%%%%%%%%%%%%%%%%%%%%%%%%%%%%%%%%%%%%%%%%%%%%%%%%%%%%%%
%%%%%%%%%%%%%%%%%%%%%%%%%%%%%%%%%%%%%%%%%%%%%%%%%%%%%%%%%%%%%%%%%%%%%%%%%%%%%%%%%%%%%%%%%
\section{Adjacency Resolving Sets}\label{Adjacency Resolving Sets}

S. Khuller et al.~\cite{landmarks} have considered  the
application of the metric dimension of a connected graph in robot
navigation. In that sense, a robot moves from node to node of a
graph space. If the robot knows its distances to a sufficiently
large set of landmarks, its position on the graph is uniquely
determined. This suggest the problem of finding the fewest number
of landmarks needed, and where should be located, so that the
distances to the landmarks uniquely determine the robot's position
on the graph. The solution of this problem is the metric dimension
and a basis of the graph.
\par Now let there exist a large
number of landmarks, but the cost of computing  distance is much
for the robot. In this case, robot can determine its position on
the graph only by knowing landmarks which are adjacent to it.
Here, the problem of finding the fewest number of landmarks
needed, and where should be located, so that the adjacency and
none-adjacency to the landmarks uniquely determine the robot's
position on the graph is a different problem. The answer to this
problem is one of the motivations of introducing {\it adjacency
resolving sets} in graphs.

\begin{deff}\label{Adjacency metric dimension}
Let $G$ be a graph and $W=\{w_1,w_2,\ldots,w_k\}$ be an ordered
subset of $V(G)$. For each vertex $v\in V(G)$ the adjacency
representation of $v$ with respect to $W$ is  $k$-vector
$$r_2(v|W):=(a_G(v,w_1),a_G(v,w_2),\ldots,a_G(v,w_k)),$$ where
$$a_G(v,w_i)=\left\{
\begin{array}{ll}
0 &  {\rm if}~v=w_i, \\
1 &  {\rm if}~v\sim w_i,\\
2 &   {\rm if}~v\nsim w_i.
\end{array}\right.$$ If all distinct vertices of $G$ have distinct
adjacency representations, $W$ is called an adjacency resolving
set for $G$. The minimum cardinality of an adjacency resolving set
is called  adjacency metric dimension of $G$, denoted by
$\beta_2(G)$. An adjacency resolving set of cardinality
$\beta_2(G)$ is called an adjacency basis of $G$.
\end{deff}
%\par It is obvious that, if $G$ has at most one isolated vertex,
%then $\beta_2(G)$ is the minimum number of columns of laplacian
%matrix $Q$ of $H$, such that all rows of the corresponding
%submatrix of $Q$ to those columns are distinct.
\par By the definition, if $G$ is a
connected graph with diameter $2$, then $\beta(G)=\beta_2(G)$. The
converse is false; it  can be seen that
$\beta_2(C_6)=2=\beta(C_6)$ while, $diam(C_6)=3$.
\par In the following, we obtain some useful
results on the adjacency metric dimension of  graphs.
\begin{pro}\label{B(H)<B2(H)} For every connected graph $G$,   $\beta(G)\leq \beta_2(G)$.
\end{pro}
\begin{proof}{
Let $W$ be an adjacency basis of $G$. Thus, for each pair of
vertices $u,v\in V(G)$ there exist a vertex $w\in W$ such that,
$a_G(u,w)\neq a_G(v,w)$. Therefore, $d_G(u,w)\neq d_G(v,w)$ and
hence $W$ is a resolving set for $G$. }\end{proof}
\begin{pro}\label{B2(H)=B2(overlineH)}
For every graph $G$,  $\beta_2(G)=\beta_2(\overline{G})$.
\end{pro}
\begin{proof}{Let $W$ be an adjacency basis of $G$. For each pair of
vertices $u,v\in V(G)$, there exist a vertex $w\in W$ such that
$a_G(u,w)\neq a_G(v,w)$. Without loss of generality,   assume that
$a_G(u,w)< a_G(v,w)$. Thus, if $a_G(u,w)=0$, then
$a_{\overline{G}}(u,w)=0$ and $a_{\overline{G}}(v,w)>0$. Also, if
$a_G(u,w)=1$, then $a_G(v,w)=2$ and hence,
$a_{\overline{G}}(u,w)=2$ and $a_{\overline{G}}(v,w)=1$.
Therefore, $W$ is an adjacency resolving set for $\overline{G}$
and $\beta_2(\overline{G})\leq \beta_2(G)$. Since
$\overline{\overline{G}}=G$, we conclude that $\beta_2(G)\leq
\beta_2(\overline{G})$ and consequently, $\beta_2(G)=
\beta_2(\overline{G})$. }\end{proof} Let $G$ be a graph of order
$n$.  It is easy to see that, $1\leq \beta_2(G)\leq n-1$. In the
following proposition, we characterize all graphs $G$ with
$\beta_2(G)=1$ and all graphs $G$ of order $n$ and
$\beta_2(G)=n-1$.
\begin{pro}\label{characterization 1, m-1}
If $G$ is a graph of order $n$, then
\begin{description}
 \item (i) $\beta_2(G)=1$ if and only if
$G\in\{P_1,P_2,P_3,\overline{P}_2,\overline{P}_3\}$.
\item (ii) $\beta_2(G)=n-1$ if and only if $G=K_n$ or
$G=\overline{K}_n$.
\end{description}
\end{pro}
\begin{proof}{(i) It is easy to see that for
$G\in\{P_1,P_2,P_3,\overline{P}_2,\overline{P}_3\}$,
$\beta_2(G)=1$. Conversely, let $G$ be a graph with
$\beta_2(G)=1$. If $G$ is a connected graph, then by
Proposition~\ref{B(H)<B2(H)}, $\beta(G)\leq \beta_2(G)=1$. Thus,
by Theorem~\ref{B=1,B=n-1}, $G=P_n$. If $n\geq 4$, then
$\beta_2(P_n)\geq 2$. Hence, $n\leq3$. If $G$ is a disconnected
graph and $\beta_2(G)=1$, then $\overline{G}$ is a connected graph
and by Proposition~\ref{B2(H)=B2(overlineH)},
$\beta_2(\overline{G})=1$. Thus, $\overline{G}=P_n$,
$n\in\{2,3\}.$ Therefore, $G=\overline{P}_2$ or
$G=\overline{P}_3$.

\noindent (ii) By Proposition~\ref{B(H)<B2(H)}, we have
$n-1=\beta(K_n)\leq \beta_2(K_n)$. On the other hand,
$\beta_2(G)\leq n-1$. Therefore, $\beta_2(K_n)=n-1$ and by
Proposition~\ref{B2(H)=B2(overlineH)},
$\beta_2(\overline{K}_n)=\beta_2(K_n)=n-1$. Conversely, let $G$ be
a connected graph with $\beta_2(G)=n-1$. Suppose on the contrary
that $G\neq K_n$. Thus, $P_3$ is an induced subgraph of $G$. Let
$P_3=(x_1,x_2,x_3)$. Therefore, $a_G(x_2,x_1)=1$ and
$a_G(x_3,x_1)=2$. Consequently, $V(G)\backslash\{x_2,x_3\}$ is an
adjacency resolving set for $G$ of cardinality $n-2$. That is,
$\beta_2(G)\leq n-2$, which is a contradiction. Hence, $G=K_n$. If
$G$ is a disconnected graph with $\beta_2(G)=n-1$, then
$\overline{G}$ is a connected graph and by
Proposition~\ref{B2(H)=B2(overlineH)},
$\beta_2(\overline{G})=n-1$. Thus, $\overline{G}=K_n$.
}\end{proof}
\begin{lemma}\label{delta=n-1}
If $u$ is a vertex of degree $n(G)-1$ in a connected graph $G$,
then $G$ has a basis which does not include  $u$.
\end{lemma}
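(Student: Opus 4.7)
The plan is to begin with any basis $B$ of $G$ that contains $u$ and to exhibit a vertex $v\ne u$ for which $B'':=(B\setminus\{u\})\cup\{v\}$ is again a basis. The key structural fact is that because $u$ has degree $n(G)-1$, we have $d(u,u)=0$ and $d(u,x)=1$ for every $x\in V(G)\setminus\{u\}$. Hence in the representation $r(y|B)$ of any vertex $y$, the coordinate associated with $u$ equals $0$ if $y=u$ and $1$ otherwise. In other words, within the basis $B$ the landmark $u$ separates only $u$ from the remaining vertices and distinguishes no pair of vertices in $V(G)\setminus\{u\}$.

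Put $B':=B\setminus\{u\}$. Since $B$ is a basis, $B'$ cannot resolve $G$, so some pair of distinct vertices has equal $B'$-representation. The observation above implies that any such unresolved pair must involve $u$, so there exists $v\in V(G)\setminus\{u\}$ with $r(v|B')=r(u|B')=(1,1,\ldots,1)$. I claim that such $v$ is unique: if another $v'\ne v$ also satisfied $r(v'|B')=(1,\ldots,1)$, then $r(v|B)$ and $r(v'|B)$ would agree on $B'$ and agree on the $u$-coordinate as well (both equal to $1$), contradicting that $B$ resolves $G$. Note also that $v\notin B'$, since every landmark $w\in B'$ has a $0$ in its own coordinate of $r(w|B')$.

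It remains to check that $B'':=B'\cup\{v\}$ resolves $G$. Two distinct vertices $x,y\in V(G)\setminus\{u\}$ were resolved by $B$ but share the $u$-coordinate, hence are already resolved by $B'\subset B''$. For a pair $(u,x)$ with $x\ne u$: either $x=v$, in which case the $v$-coordinate gives $0$ versus $1$; or $x\notin\{u,v\}$, in which case uniqueness of $v$ forces $r(x|B')\ne(1,\ldots,1)=r(u|B')$, so some landmark of $B'$ separates them. Since $|B''|=|B|=\beta(G)$, $B''$ is a basis of $G$ avoiding $u$. The only subtle step is the uniqueness of $v$, which is exactly where the minimality of $B$ is invoked; everything else follows from the fact that $u$ has the constant distance profile $(1,\ldots,1)$ with respect to any landmark set disjoint from $\{u\}$.
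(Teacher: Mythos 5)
Your proposal is correct and follows essentially the same route as the paper's own proof: observe that the landmark $u$ separates no pair of vertices in $V(G)\setminus\{u\}$ because $d(u,x)=1$ for all $x\neq u$, locate the unique vertex $v$ with $r(v|B\setminus\{u\})=(1,\ldots,1)$ using the minimality of $B$, and swap $v$ in for $u$. Your write-up is, if anything, slightly more explicit than the paper's about why the unresolved pair must involve $u$ and why $v\notin B\setminus\{u\}$.
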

\begin{proof}{Let $B$ be a basis of $G$ which contains $u$.
Thus, $r(u|B\backslash\{u\})=(1,\ldots,1)$. Since $B$ is a basis
of $G$, there exist two vertices $v,w\in
V(G)\backslash(B\backslash\{u\})$ such that,
$r(v|B\backslash\{u\})=r(w|B\backslash\{u\})$ and $d_G(u,v)\neq
d_G(u,w)$. If $u\notin\{v,w\}$, then $d(u,v)=d(u,w)=1$, which is a
contradiction. Hence, $u\in\{v,w\}$, say $u=v$. Therefore,
$r(w|B\backslash\{u\})=r(u|B\backslash\{u\})=(1,1,\ldots,1)$ and
for each $x,y\in V(G)\backslash\{u,w\}$,
$r(x|B\backslash\{u\})\neq r(y|B\backslash\{u\})$. Note that,
$r(w|B)=(1,1,\ldots,1)$, because $u\sim w$. Since $B$ is a basis
of $G$, $w$ is the unique vertex of $G$ which its representation
with respect to $B$ is entirely $1$. It implies that $w$ is the
unique vertex of $G\backslash B$ with
$r(w|B\backslash\{u\})=(1,1,\ldots,1)$. Therefore, the set
$(B\backslash\{u\})\cup\{w\}$ is a basis of $G$ which does not
contain $u$.
 }\end{proof}
\begin{pro}\label{H join K_1}
For every graph $G$, $\beta(G\vee K_1)-1\leq \beta_2(G)\leq
\beta(G\vee K_1)$. Moreover, $\beta_2(G)=\beta(G\vee K_1)$ if and
only if $G$ has an adjacency basis  for which no vertex has
adjacency representation entirely $1$ with respect to it.
\end{pro}
\begin{proof}{
Let $V(G)=\{v_1,v_2,\ldots,v_n\}$ and $V(K_1)=\{u\}$. Note that,
$d_{G\vee K_1}(v_i,v_j)=a_G(v_i,v_j)$, $1\leq i,j \leq n$.  By
Lemma~\ref{delta=n-1}, $G\vee K_1$ has a basis
$B=\{b_1,b_2,\ldots,b_k\}$ such that $u\notin B$.
Therefore,$$r(v_i|B)=(d_{G\vee K_1}(v_i,b_1),d_{G\vee
K_1}(v_i,b_2),\ldots,d_{G\vee K_1}(v_i,b_k))=r_2(v_i|B)$$ for each
$v_i$, $1\leq i\leq n$. Thus, $B$ is an adjacency resolving set
for $G$ and  $\beta_2(G)\leq \beta(G\vee K_1)$.

\par Now let $W=\{w_1,w_2,\ldots,w_t\}$ be an adjacency basis of $G$.
Since $d_{G\vee K_1}(v_i,w_j)=a_G(v_i,w_j)$, $1\leq i \leq n$,
$1\leq j\leq t$, we have $r(v_i|W)=r_2(v_i|W)$, $1\leq i\leq n$.
Hence,  $W$ resolves $V(G\vee K_1)\backslash \{u\}$ and
$\beta(G\vee K_1)-1\leq \beta_2(G)$. On the other hand, $r(u|W)$
is entirely $1$. Therefore, $W$ is a resolving set for $G\vee K_1$
if and only if $r_2(v_i|W)$ is not entirely $1$ for every $v_i$,
$1\leq i\leq n$. Since $\beta_2(G)\leq \beta(G\vee K_1)$, we have
$\beta_2(G)=\beta(G\vee K_1)$ if and only if $r_2(v_i|W)$ is not
entirely $1$ for every $v_i$, $1\leq i\leq n$. }\end{proof}
\begin{pro}\label{B_2(P_m),B_2(C_m)}
If $n\geq 4$, then $\beta_2(C_n)=\beta_2(P_n)=\lfloor{{2n+2}\over
5}\rfloor$.
\end{pro}
\begin{proof}{
If $n\leq 8$, then by a simple computation, we can see that
$\beta_2(C_n)=\beta_2(P_n)=\lfloor{{2n+2}\over 5}\rfloor$. Now,
let $G\in\{P_n,C_n\}$, and  $n\geq 9$. By Theorem~\ref{B(P_n)
B(C_N)},  $\beta(G\vee K_1)=\lfloor{{2n+2}\over 5}\rfloor\geq 4$.
Hence, by Proposition~\ref{H join K_1}, we have  $\beta_2(G)\geq
3$. If $W$ is an adjacency basis of $G$, then for each vertex
$v\in V(G)$, $r_2(v|W)$ is not entirely $1$, because $v$ has at
most two neighbors. Therefore, by Proposition~\ref{H join K_1},
$\beta_2(G)=\beta_2(G\vee K_1)=\lfloor{{2n+2}\over 5}\rfloor$.
}\end{proof}

\begin{pro}\label{B_2 multipartite} If $K_{m_1,m_2,\ldots,m_t}$ is
the complete $t$-partite graph, then
$$\beta_2(K_{m_1,m_2,\ldots,m_t})=\beta(K_{m_1,m_2,\ldots,m_t})=\left\{
\begin{array}{ll}
m-r-1 &  ~{\rm if}~r\neq t, \\
m-r &  ~{\rm if}~r=t, \end{array}\right.$$ where $m_1,
m_2,\ldots,m_r$ are at least $2$, $m_{r+1}=\cdots=m_t=1$, and
$\sum_{i=1}^tm_i=m$.
\end{pro}
\begin{proof}{ Since $diam(K_{m_1,m_2,\ldots,m_t})=2$,
we have $\beta_2(K_{m_1,m_2,\ldots,m_t})=
\beta(K_{m_1,m_2,\ldots,m_t})$. Let $M_i$ be the partite set of
size $m_i$, $1\leq i\leq t$. For each $i,~1\leq i\leq r$, all
vertices of $M_i$ are none-adjacent twins. Also, all vertices of
$\cup_{i=r+1}^tM_i$ are adjacent twins. Let $x_i$ be a fixed
vertex in $M_i$, $1\leq i\leq r$. If $r=t$, then by
Observation~\ref{twins},
$\beta(K_{m_1,m_2,\ldots,m_t})\geq\sum_{i=1}^tm_i-r$. Also, the
set $\cup_{i=1}^tM_i\backslash\{x_1,x_2,\ldots,x_r\}$ is a
resolving set for $K_{m_1,m_2,\ldots,m_t}$ with cardinality
$\sum_{i=1}^tm_i-r$. Thus,
$\beta(K_{m_1,m_2,\ldots,m_t})=\sum_{i=1}^tm_i-r=m-r$. If $r\neq
t$, then $\cup_{i=r+1}^t M_i\neq\emptyset$. Let $x_{r+1}\in
\cup_{i=r+1}^t M_i$. Observation~\ref{twins} implies that
$\beta(K_{m_1,m_2,\ldots,m_t})\geq\sum_{i=1}^tm_i-r-1$. On the
other hand, the set
$\cup_{i=1}^tM_i\backslash\{x_1,x_2,\ldots,x_{r+1}\}$ is a
resolving set for $K_{m_1,m_2,\ldots,m_t}$ with cardinality
$\sum_{i=1}^tm_i-r-1=m-r-1$. }
\end{proof}

%%%%%%%%%%%%%%%%%%%%%%%%%%%%%%%%%%%%%%%%%%%%%%%%%%%%%%%%%%%%%%%%%%%%%%%%%%%%%%%%%%%%%%%%%%%%%%%%%%%%%%%%%%%%%%%
%%%%%%%%%%%%%%%%%%%%%%%%%%%%%%%%%%%%%%%%%%%%%%%%%%%%%%%%%%%%%%%%%%%%%%%%%%%%%%%%%%%%%%%%%%%%%%%%%%%%%%%%%%%%%%%%%
%%%%%%%%%%%%%%%%%%%%%%%%%%%%%%%%%%%%%%%%%%%%%%%%%%%%%%%%%%%%%%%%%%%%%%%%%%%%%%%%%%%%%%%%%%%%%%%%%%%%%%%%%%%%%%%%%
\section{Lexicographic Product of Graphs}\label{Lexicographic product}
Throughout this section, $G$ is a connected graph of order $n$,
$V(G)=\{v_1,v_2,\ldots,v_n\}$, $H$ is a graph of order $m$, and
$V(H)=\{u_1,u_2,\ldots,u_m\}$. Therefore, $G[H]$ is a connected
graph. For convenience,  we denote the vertex $(v_i,u_j)$  of
$G[H]$ by $v_{ij}$.  Note that, for each pair of vertices
$v_{ij},v_{rs}\in V(G[H])$,
$$d_{G[H]}(v_{ij},v_{rs})=\left\{
\begin{array}{ll}
d_G(v_i,v_r) &  ~{\rm if}~v_i\neq v_r, \\
1 &  ~{\rm if}~v_i=v_r~{\rm and}~u_j\sim u_s,\\
2 &   ~{\rm if}~v_i=v_r~{\rm and}~u_j\nsim u_s.
\end{array}\right.$$On the other words, $$d_{G[H]}(v_{ij},v_{rs})=\left\{
\begin{array}{ll}
d_G(v_i,v_r) &  ~{\rm if}~v_i\neq v_r, \\
a_H(u_j,u_s)&  ~{\rm otherwise.}
\end{array}\right.$$
Let $S$ be a subset of $V(G[H])$. The {\it projection} of $S$ onto
$H$ is the set $\{u_j\in V(H)\ |\ v_{ij}\in S\}$. Also, the ith
{\it row} of $G[H]$, denoted by $H_i$, is the set $\{v_{ij}\in
V(G[H])\ |\ 1\leq j\leq m\}.$
\begin{lemma}\label{S i Resolves H_i} If\- $W\subseteq V(G[H])$ is a
resolving set for $G[H]$, then $W\cap H_i$ resolves $H_i$, for
each $i,~1\leq i\leq n$. Moreover, the projection of $W\cap H_i$
onto $H$ is an adjacency resolving set for $H$, for each $i,~1\leq
i\leq n$.
\end{lemma}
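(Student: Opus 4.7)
The plan is to exploit the distance formula for $G[H]$ given just before the lemma: any landmark outside the row $H_i$ sees every vertex of $H_i$ at the same distance, so only landmarks inside $H_i$ can resolve vertices of $H_i$.

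First, I fix $i$ with $1\leq i\leq n$ and take two distinct vertices $v_{ij},v_{is}\in H_i$. Since $W$ resolves $G[H]$, there is some $w=v_{k\ell}\in W$ with $d_{G[H]}(v_{ij},w)\neq d_{G[H]}(v_{is},w)$. If $k\neq i$, then the distance formula gives $d_{G[H]}(v_{ij},v_{k\ell})=d_G(v_i,v_k)=d_{G[H]}(v_{is},v_{k\ell})$, a contradiction. Hence $k=i$, so $w\in W\cap H_i$. This shows $W\cap H_i$ resolves $H_i$ (with respect to the metric of $G[H]$ restricted to $H_i$).

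For the second assertion, I translate the distances within $H_i$ into adjacency-distances in $H$. Let $W\cap H_i=\{v_{i\ell_1},\ldots,v_{i\ell_q}\}$, so its projection onto $H$ is $W'=\{u_{\ell_1},\ldots,u_{\ell_q}\}$. For any $v_{ij}\in H_i$ and any $v_{i\ell_t}\in W\cap H_i$ with $j\neq \ell_t$, the distance formula gives
$$d_{G[H]}(v_{ij},v_{i\ell_t})=a_H(u_j,u_{\ell_t}),$$
while in the case $j=\ell_t$ both sides equal $0$. Hence the coordinate-wise equality $r(v_{ij}\mid W\cap H_i)=r_2(u_j\mid W')$ holds for every $j$. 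By the previous paragraph the vectors on the left are pairwise distinct as $j$ ranges over $\{1,\ldots,m\}$, so the vectors on the right are also pairwise distinct, i.e., $W'$ is an adjacency resolving set for $H$.

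The whole argument is really just bookkeeping with the two-case distance formula, so I do not expect any genuine obstacle; the only point that requires a moment's care is verifying that the equality $d_{G[H]}(v_{ij},v_{i\ell_t})=a_H(u_j,u_{\ell_t})$ also holds when $j=\ell_t$, so that the full adjacency representation (including possible $0$-entries) of $u_j$ with respect to $W'$ matches the representation of $v_{ij}$ with respect to $W\cap H_i$.
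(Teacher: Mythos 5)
Your proof is correct and follows essentially the same route as the paper's: landmarks outside row $H_i$ are equidistant from all of $H_i$, forcing the resolving landmark into $W\cap H_i$, and then the identity $d_{G[H]}(v_{ij},v_{i\ell})=a_H(u_j,u_{\ell})$ transfers resolution to adjacency resolution in $H$. Your explicit check of the $j=\ell_t$ case is a minor extra care the paper leaves implicit, but the argument is the same.
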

\begin{proof}{
Since $W$ resolves $G[H]$, for each pair of vertices
$v_{ij},v_{iq}\in H_i$, there exist a vertex $v_{rt}\in W$ such
that, $d_{G[H]}(v_{rt},v_{ij})\neq d_{G[H]}(v_{rt},v_{iq})$. If
$r\neq i$, then $d_{G[H]}(v_{rt},v_{ij})=d_G(v_r,v_i)=
d_{G[H]}(v_{rt},v_{iq})$, which is a contradiction. Therefore,
$i=r$ and $W\cap H_i$ resolves $H_i$.
\par Now,  let $u_j,u_q\in
V(H)$. Since $W\cap H_i$ resolves $H_i$, there exist a vertex
$v_{it}\in W\cap H_i$ such that, $d_{G[H]}(v_{it},v_{ij})\neq
d_{G[H]}(v_{it},v_{iq})$. Hence,
$a_H(u_t,u_j)=d_{G[H]}(v_{it},v_{ij})\neq
d_{G[H]}(v_{it},v_{iq})=a_H(u_t,u_q)$. Consequently, the
projection of $W\cap H_i$ onto $H$ is an adjacency resolving set
for $H$. }
\end{proof}
By Lemma~\ref{S i Resolves H_i}, every  basis of $G[H]$ contains
at least $\beta_2(H)$ vertices from each copy of $H$ in $G[H]$.
Thus, the following lower bound for $\beta(G[H])$ is obtained.

\begin{equation}\label{l bound}
\beta(G[H])\geq n\beta_2(H).
\end{equation}
\begin{thm}\label{B1 B2}
Let $G$ be a connected graph of order $n$ and $H$ be an arbitrary
graph. If there exist two adjacency bases $W_1$ and $W_2$ of $H$
such that, there is no vertex with adjacency representation
entirely~$1$ with respect to $W_1$ and no vertex with adjacency
representation entirely $2$ with respect to $W_2$, then
$\beta(G[H])=\beta(G[\overline H])=n\beta_2(H)$.
\end{thm}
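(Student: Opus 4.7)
The lower bound $\beta(G[H])\geq n\beta_2(H)$ is exactly inequality~(\ref{l bound}), so the task reduces to exhibiting a resolving set of $G[H]$ of cardinality $n\beta_2(H)$. The plan is to place a ``copy'' of an adjacency basis of $H$ in each row $H_i$, choosing between $W_1$ and $W_2$ according to the equivalence class of $v_i$ under the twin relation ``$\equiv$'' on $V(G)$. Explicitly, set $B_i=\{v_{ij}:u_j\in W_1\}$ when $v_i^*$ has type (1) or (K), and $B_i=\{v_{ij}:u_j\in W_2\}$ when $v_i^*$ has type (N); let $W=\bigcup_{i=1}^n B_i$, which has size $n\beta_2(H)$.

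To verify that $W$ resolves $G[H]$, I split by whether two distinct vertices $v_{ij},v_{rs}$ lie in the same row. If $i=r$, any landmark outside $H_i$ sees both at the common distance $d_G(v_k,v_i)$, so one must rely on $B_i$ alone; its projection to $H$ is $W_1$ or $W_2$, each an adjacency basis, and therefore separates $u_j$ from $u_s$. If $i\neq r$, there are three subcases. (a) When $d_G(v_i,v_r)\geq 3$, any landmark in $H_i$ sees $v_{ij}$ at a value in $\{0,1,2\}$ but $v_{rs}$ at $d_G(v_i,v_r)\geq 3$, so they differ. (b) When $d_G(v_i,v_r)\in\{1,2\}$ and some $v_k\notin\{v_i,v_r\}$ satisfies $d_G(v_k,v_i)\neq d_G(v_k,v_r)$, any landmark in $H_k$ separates the pair. (c) Otherwise $d_G(v_i,v_r)\in\{1,2\}$ and $v_i,v_r$ have identical distances to every third vertex.

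Subcase~(c) is the crux. A short direct argument (pick any $v_k\sim v_i$, note $v_k\neq v_r$, and deduce $v_k\sim v_r$ from $d_G(v_k,v_r)=d_G(v_k,v_i)=1$, and conversely) shows that in~(c), $v_i$ and $v_r$ are adjacent twins when $d_G(v_i,v_r)=1$ and non-adjacent twins when $d_G(v_i,v_r)=2$; hence $v_i^*=v_r^*$ has type (K) in the first situation and type (N) in the second. By the assignment above, both $B_i$ and $B_r$ then project to $W_1$ in the type-(K) case and to $W_2$ in the type-(N) case. The landmarks in $H_i$ therefore assign $v_{ij}$ the adjacency representation $r_2(u_j|W_1)$ or $r_2(u_j|W_2)$, while assigning $v_{rs}$ the constant vector $(1,\ldots,1)$ when $d_G(v_i,v_r)=1$ and $(2,\ldots,2)$ when $d_G(v_i,v_r)=2$. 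The assumption that $W_1$ admits no vertex with entirely-$1$ representation handles the first case, and the symmetric assumption on $W_2$ handles the second, so the two $G[H]$-representations differ.

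For the companion equality $\beta(G[\overline H])=n\beta_2(H)$, observe that for any $W\subseteq V(H)$ the adjacency representation with respect to $W$ in $\overline H$ is obtained from that in $H$ by swapping $1$ and $2$, and recall $\beta_2(H)=\beta_2(\overline H)$ from Proposition~\ref{B2(H)=B2(overlineH)}. Hence $W_1$ and $W_2$, viewed inside $\overline H$, become respectively an adjacency basis with no entirely-$2$ representation and one with no entirely-$1$ representation; applying the same construction to $\overline H$ with the roles of the two bases interchanged yields a resolving set for $G[\overline H]$ of size $n\beta_2(\overline H)=n\beta_2(H)$. The main obstacle I expect is subcase~(c): without the structural observation that ``same distance to every third vertex together with $d_G(v_i,v_r)\leq 2$'' forces $v_i,v_r$ to be twins of a definite type, one cannot deploy $W_1$ and $W_2$ in the complementary way that the hypothesis affords.
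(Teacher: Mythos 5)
Your proposal is correct and follows essentially the same route as the paper: the same lower bound from Inequality~(\ref{l bound}), the same resolving set built by placing $W_1$ in rows indexed by type-(K) vertices and $W_2$ in rows indexed by type-(N) vertices (your assignment of $W_1$ rather than $W_2$ to type-(1) rows is immaterial, since those rows are always separated by a third vertex), the same use of the entirely-$1$/entirely-$2$ hypotheses precisely for adjacent/non-adjacent twin pairs, and the same swap of $W_1$ and $W_2$ for $\overline H$.
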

\begin{proof}{
By Inequality~\ref{l bound}, we have $\beta(G[H])\geq
n\beta_2(H)$. To prove the equality, it is enough to provide a
resolving set for $G[H]$ of size $n\beta_2(H)$. For this sake, let
 $$S=\{v_{ij}\in V(G[H])\ |\ v_i\in K(G),~u_j\in W_1\}\cup \{v_{ij}\in V(G[H])\
 |\ v_i\notin K(G),~u_j\in W_2\},$$
 where $K(G)$ is the set of all vertices of $G$ in equivalence
 classes of type (K). On the other word, $K(G)$ is the set of all
 vertices of $G$ which have adjacent twins. We show that $S$ is a
 resolving set for $G[H]$.
   Let $v_{rt},v_{pq}\in
V(G[H])\backslash S$ be two distinct vertices.
The following possibilities can be happened. \vspace{4mm}\\
1. $r=p$. Note that, $v_{rt}\neq v_{pq}$ implies that $t\neq q$.
Since $W_1$ and $W_2$  are adjacency resolving sets, there exist
vertices $u_j\in W_1$ and $u_l\in W_2$ such that,
$a_H(u_t,u_j)\neq a_H(u_q,u_j)$ and $a_H(u_t,u_l)\neq
a_H(u_q,u_l)$. If $v_r\in K(G)$, then $v_{rj}\in S$ and
$d_{G[H]}(v_{rt},v_{rj})=a_H(u_t,u_j)\neq
a_H(u_q,u_j)=d_{G[H]}(v_{pq},v_{rj})$. Similarly, if $v_r\notin
K(G)$, then $v_{rl}\in S$ and $d_{G[H]}(v_{rt},v_{rl})\neq
d_{G[H]}(v_{pq},v_{rl})$.\vspace{4mm}\\
2. $r\neq p$ and $v_r,v_p\in K(G)$. If $v_r$ and $v_p$ are not
twins, then there exist a vertex $v_i\in
V(G)\backslash\{v_r,v_p\}$ which is adjacent to only one of the
vertices $v_r$ and $v_p$. Hence, for each $u_j\in W_1$, we have
$v_{ij}\in S$ and $d_{G[H]}(v_{rt},v_{ij})=d_G(v_r,v_i)\neq
d_G(v_p,v_i)= d_{G[H]}(v_{pq},v_{ij})$. If $v_r$ and $v_p$ are
twins, then $v_r\sim v_p$, because $v_r,v_p\in K(G)$. Since
$r_2(u_t|W_1)$ is not entirely $1$, there exist a vertex $u_l\in
W_1$ such that, $a_H(u_t,u_l)=2$. Therefore, $v_{rl}\in S$ and
$d_{G[H]}(v_{rt},v_{rl})=a_H(u_t,u_l)=2$. On the other hand,
$d_{G[H]}(v_{pq},v_{rl})=d_G(v_p,v_r)=1$. Thus,
$d_{G[H]}(v_{rt},v_{rl})\neq d_{G[H]}(v_{pq},v_{rl})$. \vspace{4mm}\\
3. $r\neq p$, $v_r\in K(G)$, and $v_q\notin K(G)$. In this case,
$v_r$ and $v_p$ are not twins. Therefore, there exist a vertex
$v_i\in V(G)\backslash\{v_r,v_p\}$ which is adjacent to only one
of the vertices $v_r$ and $v_p$. Let $u_j$ be a vertex of $W_1\cup
W_2$, such that $v_{ij}\in S$. Hence,
$d_{G[H]}(v_{rt},v_{ij})=d_G(v_r,v_i)\neq d_G(v_p,v_i)=
d_{G[H]}(v_{pq},v_{ij})$. \vspace{4mm}\\
4. $r\neq p$ and $v_r,v_p\notin K(G)$. If $v_r$ and $v_p$ are not
twins, then there exist a vertex $v_i\in
V(G)\backslash\{v_r,v_p\}$ which is adjacent to only one of the
vertices $v_r$ and $v_p$. Thus, for each $u_j\in W_2$, we have
$v_{ij}\in S$ and $d_{G[H]}(v_{rt},v_{ij})=d_G(v_r,v_i)\neq
d_G(v_p,v_i)= d_{G[H]}(v_{pq},v_{ij})$. If $v_r$ and $v_p$ are
twins, then $v_r\nsim v_p$, because $v_r,v_p\notin K(G)$. Since
$r_2(u_t|W_2)$ is not entirely $2$, there exist a vertex $u_l\in
W_2$,  such that $a_H(u_t,u_l)=1$. Therefore, $v_{rl}\in S$ and
$d_{G[H]}(v_{rt},v_{rl})=a_H(u_t,u_l)=1$. On the other hand,
$d_{G[H]}(v_{pq},v_{rl})=d_G(v_p,v_r)=2$, since $v_r$ and $v_p$
are none-adjacent twins in the connected $G$. Hence,
$d_{G[H]}(v_{rt},v_{rl})\neq d_{G[H]}(v_{pq},v_{rl})$.
\par Thus, $r(v_{rt}|S)\neq r(v_{pq}|S)$. Therefore, $S$ is a
resolving set for $G[H]$ with cardinality $n\beta_2(H)$.
\par Clearly, in $\overline H$, for each $u\in V(\overline
H)$, $r_2(u|W_1)$ is not entirely $2$ and $r_2(u|W_2)$ is not
entirely $1$. Since $\beta_2(H)=\beta_2(\overline H)$, by
interchanging the roles of $W_1$ and $W_2$ for $\overline H$, we
conclude $\beta(G[\overline H])=n\beta_2(\overline
H)=n\beta_2(H)$. }\end{proof} In the following three theorems, we
obtain $\beta(G[H])$, when $H$ does not satisfy the assumption of
Theorem~\ref{B1 B2}.
\begin{thm}\label{thm generalG[H]}
Let $G$ be a connected graph of order $n$ and $H$ be an arbitrary
graph. If for each adjacency basis $W$ of $H$ there exist vertices
with adjacency representations entirely $1$ and entirely $2$ with
respect to $W$, then $\beta(G[H])=\beta(G[\overline
H])=n(\beta_2(H)+1)-\iota(G).$
\end{thm}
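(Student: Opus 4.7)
The plan is to bracket $\beta(G[H])$ between matching lower and upper bounds, and then deduce the statement for $\overline H$ from Proposition~\ref{B2(H)=B2(overlineH)}. Fix an adjacency basis $W_1$ of $H$. The hypothesis furnishes a vertex $u^*_1 \in V(H)\setminus W_1$ with adjacency representation entirely $1$ with respect to $W_1$; since two such vertices would be indistinguishable by $W_1$, this vertex is \emph{unique}, and similarly there is a unique vertex $u^*_2 \in V(H)\setminus W_1$ entirely $2$ with respect to $W_1$. This uniqueness is the combinatorial engine of the proof.

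For the lower bound, let $W$ be any resolving set of $G[H]$ and let $W_i$ denote the projection of $W\cap H_i$ onto $H$, so Lemma~\ref{S i Resolves H_i} gives $|W_i|\ge \beta_2(H)$. I claim that in each equivalence class $C$ of the twin relation on $G$ at most one index $i\in C$ can satisfy $|W_i|=\beta_2(H)$. Suppose otherwise for a type-(K) class $C$, with two indices $a,b$ whose projections $W_a, W_b$ are adjacency bases; by hypothesis they admit entirely-$1$ vertices $u_{s_a}$ and $u_{s_b}$ respectively. Comparing $r(v_{as_a}|W)$ and $r(v_{bs_b}|W)$, entries from landmarks in rows outside $\{a,b\}$ agree because $v_a\equiv v_b$; entries from landmarks in row $a$ equal $1$ on both sides, either via the entirely-$1$ property or via $d_G(v_a,v_b)=1$; and row-$b$ entries equal $1$ by symmetry. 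Hence these two representations coincide, a contradiction. The same argument with $d_G=2$ and entirely-$2$ in place of entirely-$1$ settles type-(N) classes. Summing over classes gives $|W| \ge \sum_C\bigl(|C|\beta_2(H)+|C|-1\bigr) = n\beta_2(H) + n - \iota(G)$, which is the desired bound (and is tight for singleton classes as well).

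For the upper bound I exhibit a resolving set of matching size. In each equivalence class $C$ choose a representative $v_{i_C}$ and set
\begin{equation*}
S = \bigcup_C\Bigl(\{v_{i_C j}:u_j\in W_1\}\cup\bigcup_{v_i\in C\setminus\{v_{i_C}\}}\{v_{ij}:u_j\in W_1\cup\{u^*_{\varepsilon(C)}\}\}\Bigr),
\end{equation*}
with $\varepsilon(C)=1$ for type-(K) classes and $\varepsilon(C)=2$ for type-(N) classes (singleton classes contribute only the representative term). Counting yields $|S|=\iota(G)\beta_2(H) + (n-\iota(G))(\beta_2(H)+1) = n(\beta_2(H)+1) - \iota(G)$. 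The crucial feature enjoyed by $S$ is that in a non-representative row of a type-(K) class the enlarged projection $W_1\cup\{u^*_1\}$ has no vertex with entirely-$1$ representation (any such would differ from $u^*_1$ yet be entirely $1$ with respect to $W_1$, violating uniqueness), and analogously $W_1\cup\{u^*_2\}$ has no entirely-$2$ vertex. With this in hand, the verification that $S$ resolves $G[H]$ mirrors the four-case analysis in the proof of Theorem~\ref{B1 B2}: same-row pairs are resolved by the adjacency resolving property of every projection; pairs in rows of distinct equivalence classes are resolved by any landmark in a row of $G$ separating them; and pairs in rows of the same type-(K) (resp.\ type-(N)) class are resolved using the non-representative row, where the absence of entirely-$1$ (resp.\ entirely-$2$) representations produces a landmark whose adjacency value differs from the constant twin-distance $1$ (resp.\ $2$).

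The main obstacle is the upper-bound construction: the uniqueness of $u^*_1$ and $u^*_2$ forces the extra landmark in each non-representative row to be exactly one of these vertices, and checking that this single adjunction simultaneously handles every pair of twins requires the case analysis above. Finally, Proposition~\ref{B2(H)=B2(overlineH)} shows that a subset of $V(H)$ is an adjacency basis of $H$ if and only if it is one of $\overline H$, with entries $1$ and $2$ swapped; hence the hypothesis is self-dual under complementation and the identical argument yields $\beta(G[\overline H]) = n(\beta_2(H)+1) - \iota(G)$.
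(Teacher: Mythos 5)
Your proposal is correct and follows essentially the same route as the paper: the lower bound via projections onto rows and the observation that two rows indexed by twins cannot both carry only $\beta_2(H)$ landmarks (the paper phrases this as $|I|\le\iota(G)$ by pigeonhole), and the upper bound via the same set $S$ that augments each non-representative row of a type-(K) (resp.\ type-(N)) class by the entirely-$1$ (resp.\ entirely-$2$) vertex. Your explicit remark that the entirely-$1$ and entirely-$2$ vertices are unique is a nice touch that the paper only uses implicitly (in its ``otherwise $t=1$'' step), but the argument is the same.
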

\begin{proof}{ Let $B$ be a basis of $G[H]$ and $B_i$ be the
projection of $B\cap H_i$ onto $H$, for each $i$, $1\leq i\leq n$.
By Lemma~\ref{S i Resolves H_i}, $B_i$'s are adjacency resolving
sets for $H$. Therefore, $|B\cap H_i|=|B_i|\geq \beta_2(H)$ for
each $i$, $1\leq i\leq n$.

Let $I=\{i\ |\  |B_i|=\beta_2(H)\}$. We claim that
$|I|\leq\iota(G)$, otherwise by the pigeonhole principle, there
exist a pair of twin vertices $v_r,v_p\in V(G)$ such that,
$|B_r|=|B_p|=\beta_2(H)$. Since  $B_r$ and $B_p$ are adjacency
bases of $H$, by the assumption there are vertices $u_t$ and
$u_q$ with adjacency representations entirely $1$ with respect to
$B_r$ and $B_p$, respectively. Also, there are vertices $u_t'$ and
$u_q'$ with adjacency representations entirely $2$ with respect to
$B_r$ and $B_p$, respectively.
 Hence, for each $u\in B_r$ and $u^\prime\in B_p$, we have
$u_{t}\sim u,~u_{t^\prime}\nsim u,~u_{q}\sim u^\prime$, and
$u_{q^\prime}\nsim u^\prime$. If $v_r\sim v_p$, then for each
$v_{ij}\in B$ one of the following
cases can be happened.\vspace{4mm}\\
1. $i\notin\{r,p\}$. Since $v_r$ and $v_p$ are twins, we have
$d_G(v_r,v_i)=d_G(v_p,v_i)$. On the other hand,
$d_{G[H]}(v_{rt},v_{ij})=d_G(v_r,v_i)$ and
$d_{G[H]}(v_{pq},v_{ij})=d_G(v_p,v_i)$. Thus,
$d_{G[H]}(v_{rt},v_{ij})=d_{G[H]}(v_{pq},v_{ij})$.\vspace{4mm}\\
2. $i=p\neq r$. In this case,
$d_{G[H]}(v_{pq},v_{ij})=a_H(u_{q},u_j)$ and
$d_{G[H]}(v_{rt},v_{ij})=d_G(v_r,v_i)$. Since $v_i=v_p\sim v_r$,
we have $d_G(v_r,v_i)=1$. On the other hand $u_j\in B_p$ and
hence, $a_H(u_{q},u_j)=1$. Therefore,
$d_{G[H]}(v_{rt},v_{ij})=d_{G[H]}(v_{pq},v_{ij})$.\vspace{4mm}\\
3. $i=r\neq p$. Similar to previous  case,
$d_{G[H]}(v_{rt},v_{ij})=a_H(u_{t},u_j)=1$ and
$d_{G[H]}(v_{pq},v_{ij})=d_G(v_p,v_i)=1$. Consequently,
$d_{G[H]}(v_{rt},v_{ij})=d_{G[H]}(v_{pq},v_{ij})$.\vspace{4mm}\\
4. $i=p=r$. In this case, $d_{G[H]}(v_{pq},v_{ij})=a_H(u_{q},u_j)$
and $d_{G[H]}(v_{rt},v_{ij})=a_H(u_{t},u_j)$. Since, $u_j\in
B_p=B_r$, we have $a_H(u_{q},u_j)=1=a_H(u_{t},u_j)$. Thus,
$d_{G[H]}(v_{rt},v_{ij})=d_{G[H]}(v_{pq},v_{ij})$.
\vspace{4mm}\\
Hence, $v_r\sim v_p$ implies that $r(v_{rt}|B)=r(v_{pq}|B)$, which
is a contradiction. Therefore, $v_r\nsim v_p$. Since $G$ is a
connected graph, none-adjacent twin vertices $v_r$ and $v_p$ have
at least one common neighbor and thus, $d_G(v_r,v_p)=2$.
Consequently, by a same method as the case $v_r\sim v_p$, we can
see that $r(v_{rt^\prime}|B)=r(v_{pq^\prime}|B)$, which
contradicts the assumption that $B$ is a basis of $G[H]$. Hence
$|I|\leq \iota(G)$. On the other hand, every basis of $G[H]$ has
at least $\beta_2(H)+1$ vertices in $H_i$, where $i\notin I$.
 Therefore,
\begin{eqnarray*}
\beta(G[H])=|B|=|\cup_{i=1}^n(B\cap H_i)|&\geq &|I|\beta_2(H)+(n-|I|)(\beta_2(H)+1)\\
&=&n\beta_2(H)+n-|I|\\
&\geq & n(\beta_2(H)+1)-\iota(G).
\end{eqnarray*}
Now let $W$ be an adjacency basis of $H$. By assumption, there
exist vertices  $u_1,u_2\in V(H)\backslash W$ such that, $u_1$ is
adjacent to all vertices of $W$ and $u_2$ is not adjacent to any
vertex of $W$. Also, let $K(G)$ be the set of all classes  of
type (K), and $N(G)$ be the set of all classes of $G$ of type (N)
in $G$. Choose fixed vertex $v$ from $v^*$ for each $v^*\in
N(G)\cup K(G)$. We claim that the set
$$S=\{v_{ij}\in V(G[H])\,|\,u_j\in W\}\cup\{v_{t1}\,|\,v_t\in
\cup_{v^*\in K(G)}(v^*\backslash\{v\})\} \cup\{v_{t2}\,|\,v_t\in
\cup_{v^*\in N(G)}(v^*\backslash\{v\})\}$$ is a resolving set for
$G[H]$. Let $v_{rt},v_{pq}\in V(G[H])\backslash S$. Hence, one
of the following cases can be happened.\vspace{4mm}\\
1. $r=p$. Since $W$ is an adjacency basis of $H$, there exist a
vertex $u_j\in W$,  such that $a_H(u_q,u_j)\neq a_H(u_t,u_j)$.
Therefore, $d_{G[H]}(v_{pq},v_{rj})=a_H(u_q,u_j)\neq
a_H(u_t,u_j)=d_{G[H]}(v_{rt},v_{rj})$. Consequently,
$r(v_{rt}|S)\neq r(v_{pq}|S)$. \vspace{4mm}\\
2. $r\neq p$ and $v_r,v_p$ are not twins. Hence, there exist a
vertex $v_i\in V(G)$ which is adjacent to only one of the vertices
$v_r$ and $v_p$. Thus, for each vertex $u_j\in W$,
$d_{G[H]}(v_{rt},v_{ij})=d_G(v_r,v_i)\neq
d_G(v_p,v_i)=d_{G[H]}(v_{pq},v_{ij})$.
This yields, $r(v_{rt}|S)\neq r(v_{pq}|S)$. \vspace{4mm}\\
3. $v_r$ and $v_p$ are adjacent twins. Therefore, at least one of
the vertices $v_{r1}$ and $v_{p1}$, say $v_{r1}$ belongs to $S$.
Since $v_{rt}\notin S$, we have $t\neq 1$. Hence, there exists a
vertex $u_j\in S$, such that $a_H(u_t,u_j)=2$, otherwise $t=1$.
Consequently, $d_{G[H]}(v_{rt},v_{rj})=a_H(u_t,u_j)=2$. On the
other hand, $d_{G[H]}(v_{pq},v_{rj})=d_G(v_p,v_r)=1$, because
$v_r\sim v_p$. This gives, $r(v_{rt}|S)\neq r(v_{pq}|S)$. \vspace{4mm}\\
4. $v_r$ and $v_p$ are none-adjacent twins. In this case, at least
one of the vertices $v_{r2}$ and $v_{p2}$, say $v_{r2}$ belongs to
$S$. Hence, $t\neq 2$ and there exists a vertex $u_j\in W$, such
that $a_H(u_t,u_j)=1$, otherwise $t=2$. Therefore,
$d_{G[H]}(v_{rt},v_{rj})=a_H(u_t,u_j)=1\neq2=d_G(v_p,v_r)=d_{G[H]}(v_{pq},v_{rj})$.
 Thus, $r(v_{rt}|S)\neq r(v_{pq}|S)$. \vspace{4mm}\\
Consequently, $S$ is a resolving set for $G[H]$ with cardinality
$$|S|=n\beta_2(H)+a(G)-\iota_{_K}(G)+b(G)-\iota_{_N}(G)=n(\beta_2(H)+1)-\iota(G). $$ Since all adjacency bases of $H$ and
$\overline H$ are the same, $\overline H$ satisfies the condition
of the theorem. Hence, $\beta(G[\overline
H])=n(\beta_2(H)+1)-\iota(G)$ and the proof is completed.
}\end{proof}
\begin{thm}\label{nB2+a(G) K(G)}
Let $G$ be a connected graph of order $n$ and $H$ be an arbitrary
graph. If $H$ has the following properties
\begin{description}\item (i) for each adjacency basis of $H$
there exist a vertex with adjacency representation entirely $1$,
\item (ii) there exist an adjacency basis $W$ of $H$ such that
there is no vertex with adjacency representation entirely $2$ with
respect to $W$,
\end{description} then
$\beta(G[H])=n\beta_2(H)+a(G)-\iota_{_K}(G).$
\end{thm}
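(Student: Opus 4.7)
The plan is to prove matching lower and upper bounds for $\beta(G[H])$, in the spirit of Theorem~\ref{thm generalG[H]} but using only the one-sided hypothesis on $H$. One preliminary observation to keep in hand throughout is that if $u,u'\in V(H)$ both have adjacency representation $(1,1,\ldots,1)$ with respect to an adjacency basis $W$ of $H$, then they have identical adjacency representations and so $u=u'$. Consequently, under (i) every adjacency basis of $H$ admits \emph{exactly one} vertex with entirely-$1$ representation; this uniqueness is what ultimately lets us dispense with the entirely-$2$ ingredient used in Theorem~\ref{thm generalG[H]}.

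For the lower bound I would mimic the pigeonhole argument of Theorem~\ref{thm generalG[H]}, restricted to adjacent twins. Let $B$ be a basis of $G[H]$, write $B_i$ for the projection of $B\cap H_i$ onto $H$, and set $I=\{i:|B_i|=\beta_2(H)\}$. By Lemma~\ref{S i Resolves H_i} each $B_i$ is an adjacency resolving set of $H$. If $v_r,v_p$ are adjacent twins with both $r,p\in I$, then picking the entirely-$1$ vertices $u_t,u_q$ given by (i) for $B_r,B_p$ yields $r(v_{rt}\mid B)=r(v_{pq}\mid B)$ exactly as in the adjacent-twin part of the proof of Theorem~\ref{thm generalG[H]}, contradicting that $B$ is a basis. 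Hence each type-(K) class of $G$ contributes at most one index to $I$, so $|I|\le n-a(G)+\iota_{_K}(G)$, and therefore
\[
|B|\;\ge\;|I|\beta_2(H)+(n-|I|)(\beta_2(H)+1)\;\ge\;n\beta_2(H)+a(G)-\iota_{_K}(G).
\]
Because (ii) is strictly weaker than the entirely-$2$ condition Theorem~\ref{thm generalG[H]} requires for non-adjacent twins, we obtain---and need---no reduction from type-(N) classes.

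For the upper bound let $W$ be an adjacency basis of $H$ furnished by (ii), let $u_1$ be the unique vertex of $V(H)\setminus W$ with entirely-$1$ representation with respect to $W$, and choose a representative $v\in v^*$ in each type-(K) class of $G$. Define
\[
S=\{v_{ij}\in V(G[H])\mid u_j\in W\}\cup\bigl\{v_{t1}\mid v_t\in\textstyle\bigcup_{v^*\in K(G)}(v^*\setminus\{v\})\bigr\},
\]
so $|S|=n\beta_2(H)+a(G)-\iota_{_K}(G)$. To show $S$ resolves $G[H]$ I split into the same four cases used in Theorem~\ref{thm generalG[H]}. The cases $r=p$ and ``$v_r,v_p$ non-twins'' transfer verbatim via the $W$-landmarks. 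In the non-adjacent-twin case, property (ii) guarantees some $u_j\in W$ with $a_H(u_t,u_j)=1$, and since $d_G(v_r,v_p)=2$ the landmark $v_{rj}$ separates $v_{rt}$ from $v_{pq}$.

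The main obstacle is the adjacent-twin case, where Theorem~\ref{thm generalG[H]} normally invokes a vertex of entirely-$2$ representation that we do not possess. I would resolve it using the uniqueness observation: WLOG $v_r$ is not the representative, so $v_{r1}\in S$ and hence $t\ne 1$. If $a_H(u_t,u_1)=2$ then $v_{r1}$ already distinguishes $v_{rt}$ from $v_{pq}$, since the latter lies at distance $d_G(v_p,v_r)=1$ from $v_{r1}$. Otherwise $a_H(u_t,u_1)=1$, and then $u_t\ne u_1$; since $u_1$ is the \emph{unique} entirely-$1$ vertex with respect to $W$, some $u_j\in W$ must satisfy $a_H(u_t,u_j)=2$, and $v_{rj}$ then separates $v_{rt}$ from $v_{pq}$. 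This uniqueness trick is precisely what causes the bound $n(\beta_2(H)+1)-\iota(G)$ of Theorem~\ref{thm generalG[H]} to collapse to $n\beta_2(H)+a(G)-\iota_{_K}(G)$ under the weaker hypotheses here.
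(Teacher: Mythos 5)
Your proof is correct and follows essentially the same route as the paper: the identical pigeonhole lower bound restricted to adjacent twins via hypothesis (i), and the identical resolving set $S$ built from $W$ together with extra copies of the entirely-$1$ vertex $u_1$ placed in the non-representative positions of each type-(K) class. Your explicit uniqueness argument in the adjacent-twin case is exactly what the paper compresses into the phrase ``otherwise $t=1$'' in case 3 of the proof of Theorem~\ref{thm generalG[H]}, which the proof of Theorem~\ref{nB2+a(G) K(G)} invokes by reference.
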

\begin{proof}{
Let $B$ be a basis of $G[H]$ and $B_i$ be the projection of $B\cap
H_i$ onto $H$, for each $i$, $1\leq i\leq n$. By Lemma~\ref{S i
Resolves H_i}, $B_i$'s are adjacency resolving sets for $H$.
Therefore, $|B\cap H_i|=|B_i|\geq \beta_2(H)$ for each $i$, $1\leq
i\leq n$.

Let $I=\{i\ |\  |B_i|=\beta_2(H)\}$. We claim that $|I|\leq
n-a(G)+\iota_{_K}(G)$, otherwise by the pigeonhole principle,
there exist a pair of adjacent twin vertices $v_r,v_p\in V(G)$,
such that $|B_r|=|B_p|=\beta_2(H)$. Since  $B_r$ and $B_p$ are
adjacency bases of $H$, by assumption $(i)$    there exist
vertices $u_{t},u_q\in V(H)$ with adjacency representation
entirely $1$ with respect to $B_r$ and $B_p$, respectively.
Hence, for each $u\in B_r$ and each $u^\prime\in B_p$, we have
$u_{t}\sim u$, and $u_q\sim u'$. Since $v_r\sim v_p$, for each
$v_{ij}\in B$ one of the following
cases can be happened.\vspace{4mm}\\
1. $i\notin\{r,p\}$. Since $v_r$ and $v_p$ are twins, we have
$d_G(v_r,v_i)=d_G(v_p,v_i)$. On the other hand,
$d_{G[H]}(v_{rt},v_{ij})=d_G(v_r,v_i)$ and
$d_{G[H]}(v_{pq},v_{ij})=d_G(v_p,v_i)$. Thus,
$d_{G[H]}(v_{rt},v_{ij})=d_{G[H]}(v_{pq},v_{ij})$.\vspace{4mm}\\
2. $i=p\neq r$. In this case,
$d_{G[H]}(v_{pq},v_{ij})=a_H(u_{q},u_j)$ and
$d_{G[H]}(v_{rt},v_{ij})=d_G(v_r,v_i)$. Since $v_i=v_p\sim v_r$,
we have $d_G(v_r,v_i)=1$. On the other hand $u_j\in B_p$ and
hence, $a_H(u_{q},u_j)=1$. Therefore,
$d_{G[H]}(v_{rt},v_{ij})=d_{G[H]}(v_{pq},v_{ij})$.\vspace{4mm}\\
3. $i=r\neq p$. Similar to previous  case,
$d_{G[H]}(v_{rt},v_{ij})=a_H(u_{t},u_j)=1$ and
$d_{G[H]}(v_{pq},v_{ij})=d_G(v_p,v_i)=1$. Consequently,
$d_{G[H]}(v_{rt},v_{ij})=d_{G[H]}(v_{pq},v_{ij})$.\vspace{4mm}\\
4. $i=p=r$. In this case, $d_{G[H]}(v_{pq},v_{ij})=a_H(u_{q},u_j)$
and $d_{G[H]}(v_{rt},v_{ij})=a_H(u_{t},u_j)$. Since, $u_j\in
B_p=B_r$, we have $a_H(u_{q},u_j)=1=a_H(u_{t},u_j)$. Thus,
$d_{G[H]}(v_{rt},v_{ij})=d_{G[H]}(v_{pq},v_{ij})$.
\vspace{4mm}\\
Hence, $r(v_{rt}|B)=r(v_{pq}|B)$, which is a contradiction.
Therefore, $|I|\leq n-a(G)+\iota_{_K}(G)$.   On the other hand,
every basis of $G[H]$ has at least $\beta_2(H)+1$ vertices in
$H_i$, where $i\notin I$. Thus,
\begin{eqnarray*}
\beta(G[H])=|B|&\geq &|I|\beta_2(H)+(n-|I|)(\beta_2(H)+1)\\
&=&n\beta_2(H)+n-|I|\\
&\geq & n\beta_2(H)+a(G)-\iota_{_K}(G).
\end{eqnarray*}
Now let $K(G)$ be the set of all classes  of type (K) in $G$ and
$v\in v^*$ be a fixed vertex for each class $v^*$ of type (K).
Also, let $u_1\in V(H)\backslash W$, such that $r_2(u_1|W)$ is
entirely $1$. Consider $$S=\{v_{ij}\in V(G[H])\,|\,u_j\in
W\}\cup\{v_{t1}\,|\,v_t\in \cup_{v^*\in
K(G)}(v^*\backslash\{v\})\}$$ and let $v_{rt},v_{pq}\in
V(G[H])\backslash S$. If $v_r$ and $v_p$ are not none-adjacent
twins, then similar to the proof of Theorem~\ref{thm generalG[H]},
we have $r(v_{rt}|S)\neq r(v_{pq}|S)$. Now, let $v_r$ and $v_p$ be
none-adjacent twin vertices of $G$. By assumption, there exists a
vertex $u_j\in W$,  such that $a_H(u_t,u_j)=1$. Therefore,
$d_{G[H]}(v_{rt},v_{rj})=a_H(u_t,u_j)=1$. On the other hand,
$d_{G[H]}(v_{pq},v_{rj})=d_G(v_p,v_r)=2$, since $v_r$ and $v_p$
are none-adjacent twins in the connected graph $G$. Hence,
$r(v_{rt}|S)\neq r(v_{pq}|S)$. This implies that $S$ is a
resolving set for $G[H]$ with cardinality
$n\beta_2(H)+a(G)-\iota_{_K}(G)$.
 }\end{proof}
 By a similar proof, we have the following theorem.
\begin{thm}\label{nB2+b(G) N(G)}
Let $G$ be a connected graph of order $n$ and $H$ be an arbitrary
graph. If $H$ has the following properties
\begin{description}\item (i) for each adjacency basis of $H$
there exist a vertex with adjacency representation entirely $2$,
\item (ii) there exist an adjacency basis $W$ of $H$ such that
there is no vertex with adjacency representation entirely~$1$ with
respect to $W$,
\end{description} then
$\beta(G[H])=n\beta_2(H)+b(G)-\iota_{_N}(G).$
\end{thm}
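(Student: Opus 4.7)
The statement is the exact dual of Theorem~\ref{nB2+a(G) K(G)}, obtained by swapping adjacent/non-adjacent twins and swapping the roles of ``entirely $1$'' and ``entirely $2$'' adjacency representations, so my plan is to mirror that proof throughout, changing each $1$ to $2$ and each $K$ to $N$.

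For the lower bound, I would take a basis $B$ of $G[H]$ and let $B_i$ be the projection of $B\cap H_i$ onto $H$; by Lemma~\ref{S i Resolves H_i} each $B_i$ is an adjacency resolving set, so $|B_i|\ge\beta_2(H)$. Setting $I=\{i:|B_i|=\beta_2(H)\}$, I would claim that $|I|\le n-b(G)+\iota_{_N}(G)$. Otherwise, the pigeonhole principle produces two non-adjacent twins $v_r,v_p\in V(G)$ with $|B_r|=|B_p|=\beta_2(H)$; assumption~(i) supplies vertices $u_t,u_q\in V(H)$ whose adjacency representations are entirely $2$ with respect to $B_r$ and $B_p$, respectively. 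Because non-adjacent twins in a connected graph satisfy $d_G(v_r,v_p)=2$, a four-case split on the row index $i$ of $v_{ij}\in B$ (mirroring the four cases of Theorem~\ref{nB2+a(G) K(G)} with every occurrence of ``$1$'' replaced by ``$2$'') forces $r(v_{rt}|B)=r(v_{pq}|B)$, a contradiction. Combining the bound on $|I|$ with $|B|\ge|I|\beta_2(H)+(n-|I|)(\beta_2(H)+1)$ then yields $\beta(G[H])\ge n\beta_2(H)+b(G)-\iota_{_N}(G)$.

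For the upper bound, let $W$ be the adjacency basis provided by~(ii). Because $W$ is an adjacency basis, all vertices of $H$ have pairwise distinct adjacency representations, so at most one vertex of $V(H)\setminus W$ can have representation entirely $2$; assumption~(i) shows that exactly one does, and I call it $u_2$. Fixing a representative $v$ in each type-$(N)$ class $v^*$, I would set
$$S=\{v_{ij}\in V(G[H])\mid u_j\in W\}\cup\{v_{t2}\mid v_t\in\bigcup_{v^*\in N(G)}(v^*\setminus\{v\})\},$$
which has cardinality $n\beta_2(H)+b(G)-\iota_{_N}(G)$. To verify that $S$ resolves $G[H]$, I would split the possibilities for distinct $v_{rt},v_{pq}\in V(G[H])\setminus S$ into four subcases: $r=p$ and ``$v_r,v_p$ are not twins'' are handled identically to the previous theorem; for adjacent twins, assumption~(ii) forces $r_2(u_t|W)$ to have some coordinate $2$, giving $u_j\in W$ with $d_{G[H]}(v_{rt},v_{rj})=a_H(u_t,u_j)=2\neq 1=d_G(v_r,v_p)=d_{G[H]}(v_{pq},v_{rj})$; for non-adjacent twins, at least one of $v_{r2},v_{p2}$ lies in $S$, so after relabeling $v_{r2}\in S$, and $v_{rt}\notin S$ then forces $t\neq 2$, hence $u_t\neq u_2$, so $r_2(u_t|W)$ is not entirely $2$ and some $u_j\in W$ provides $a_H(u_t,u_j)=1\neq 2=d_G(v_r,v_p)$.

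The delicate step is the adjacent-twin subcase of the upper bound: in Theorem~\ref{nB2+a(G) K(G)} it is absorbed by the explicit extra vertices $v_{t1}$, but my analogue $S$ deliberately omits such vertices for type-$(K)$ classes. The argument goes through only because assumption~(ii) is exactly the condition that forces every $u_t\notin W$ to have some coordinate $2$ in $r_2(u_t|W)$, which is precisely the $H$-side distance $2$ needed to separate a pair of adjacent twins whose $G$-distance is $1$. This mirrors how, in the previous theorem, the dual assumption is what resolves the non-adjacent-twin subcase.
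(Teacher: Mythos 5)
Your proof is correct and is exactly the dualization of the proof of Theorem~\ref{nB2+a(G) K(G)} that the paper intends when it says ``by a similar proof'' (the paper does not write this argument out). In particular, you correctly identify the one genuinely non-mechanical swap: adjacent twins are now separated via hypothesis (ii) (every $u_t\notin W$ has a $2$-coordinate, against $d_G(v_r,v_p)=1$), while the extra landmarks $v_{t2}$ handle the non-adjacent twin classes, mirroring the roles these two devices play in the previous theorem.
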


\begin{cor}\label{no twin} If $G$  has no  pair  of twin
vertices, then $\beta (G[H])=n\beta_2(H)$.
\end{cor}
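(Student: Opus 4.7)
The plan is to reduce the corollary to a routine case analysis that invokes the four preceding theorems, using the hypothesis on $G$ to collapse every correction term to zero.

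First I would unpack the hypothesis on $G$. Since $G$ has no pair of twin vertices, every equivalence class of $V(G)$ under ``$\equiv$'' is a singleton, hence $\iota(G)=n$, $a(G)=b(G)=0$, and $\iota_{_K}(G)=\iota_{_N}(G)=0$.

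Next I would split into cases according to two properties of the graph $H$: property (A$_1$) that $H$ admits an adjacency basis with respect to which no vertex has adjacency representation entirely $1$, and property (A$_2$) that $H$ admits an adjacency basis with respect to which no vertex has adjacency representation entirely $2$. The four Boolean combinations of A$_1$ and A$_2$ are mutually exclusive and exhaustive, and each is covered by exactly one of the earlier theorems. If both A$_1$ and A$_2$ hold, Theorem~\ref{B1 B2} gives $\beta(G[H])=n\beta_2(H)$. If neither holds, then every adjacency basis of $H$ contains a vertex of representation entirely $1$ and a vertex of representation entirely $2$, so Theorem~\ref{thm generalG[H]} yields $\beta(G[H])=n(\beta_2(H)+1)-\iota(G)=n\beta_2(H)$. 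If A$_1$ fails but A$_2$ holds, Theorem~\ref{nB2+a(G) K(G)} applies and gives $\beta(G[H])=n\beta_2(H)+a(G)-\iota_{_K}(G)=n\beta_2(H)$. Symmetrically, if A$_2$ fails but A$_1$ holds, Theorem~\ref{nB2+b(G) N(G)} gives $\beta(G[H])=n\beta_2(H)+b(G)-\iota_{_N}(G)=n\beta_2(H)$.

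There is essentially no obstacle here; the only point requiring attention is confirming that the hypotheses of the four theorems genuinely partition all behaviors of the adjacency bases of $H$, which is immediate from the definitions of A$_1$ and A$_2$. Once that case partition is in place, each branch directly delivers the value $n\beta_2(H)$, completing the proof.
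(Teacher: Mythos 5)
Your proposal is correct and follows essentially the same route as the paper: the four Boolean combinations of your properties A$_1$ and A$_2$ correspond exactly to the hypotheses of Theorems~\ref{B1 B2}, \ref{thm generalG[H]}, \ref{nB2+a(G) K(G)}, and~\ref{nB2+b(G) N(G)}, and the twin-free hypothesis makes all correction terms vanish. The paper's own proof is just a compressed version of this same case analysis.
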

\begin{proof}{ The
adjacency bases of $H$ satisfy one of the conditions of Theorems
~\ref{B1 B2},~\ref{thm generalG[H]},~\ref{nB2+a(G) K(G)},
and~\ref{nB2+b(G) N(G)} . Now, if $G$ does not have any pair of
twin vertices, then $\iota(G)=n,~\iota_{_K}(G)=a(G)=0$, and
$\iota_{_N}(G)=b(G)=0$. Therefore, $\beta
(G[H])=n\beta_2(H)$.}\end{proof} By Theorems~\ref{B1 B2},~\ref{thm
generalG[H]},~\ref{nB2+a(G) K(G)}, and~\ref{nB2+b(G) N(G)} the
exact value of $\beta(G[H])$ of many graphs $G$ and $H$ can be
determined. In the following two corollaries, $\beta(G[H])$ for
some of the well known graphs are obtained.
\begin{cor}\label{G=P_n or C_n} Let $G=P_n$, $n\geq4$ or  $G=C_n$,
$n\geq5$. Then, $G$ does not have any pair of twin vertices. Thus
by Corollary~\ref{no twin}, $\beta(G[H])=n\beta_2(H)$, for each
graph $H$. In particular, by
Propositions~\ref{B2(H)=B2(overlineH)} and
\ref{B_2(P_m),B_2(C_m)},
$\beta_2(P_m)=\beta_2(C_m)=\beta_2(\overline
P_m)=\beta_2(\overline C_m)=\lfloor{2m+2\over5}\rfloor$.
Therefore, $\beta(G[P_m])=\beta(G[C_m])=\beta(G[\overline
P_m])=\beta(G[\overline C_m])=n\lfloor{2m+2\over5}\rfloor$. Also,
by Propositions~\ref{B2(H)=B2(overlineH)} and \ref{B_2
multipartite}, we have
$$\beta(G[\overline K_{m_1,m_2,\ldots,m_t}])=\beta(G[K_{m_1,m_2,\ldots,m_t}])=\left\{
\begin{array}{ll}
n(m-r-1) &  ~{\rm if}~r\neq t, \\
n(m-r) &  ~{\rm if}~r=t, \end{array}\right.$$ where $m_1,
m_2,\ldots,m_r$ are at least $2$, $m_{r+1}=\cdots=m_t=1$, and
$\sum_{i=1}^tm_i=m$.
\end{cor}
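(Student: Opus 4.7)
The plan is to reduce the entire statement to (i) checking that $P_n$ for $n\ge 4$ and $C_n$ for $n\ge 5$ are twin-free, and (ii) plugging the already-computed adjacency metric dimensions of $P_m,C_m,\overline P_m,\overline C_m,K_{m_1,\ldots,m_t},\overline K_{m_1,\ldots,m_t}$ into the formula $\beta(G[H])=n\beta_2(H)$ supplied by Corollary~\ref{no twin}.

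For step (i), I would verify the twin-free property directly from the definition $N(u)\setminus\{v\}=N(v)\setminus\{u\}$. In $P_n$ with $n\ge 4$, the two endpoints have distinct unique neighbors, and any other pair of vertices differs in at least one neighbor because the two neighbors of an interior vertex are consecutive on the path; hence no twins. In $C_n$ with $n\ge 5$, adjacent vertices have different second neighbors, vertices at distance exactly $2$ share only one common neighbor (the sharing of both would force $n=4$), and vertices at larger cycle-distance share no neighbor. So both graphs have $\iota(G)=n$ and $a(G)=b(G)=0$, placing $G$ into the hypothesis of Corollary~\ref{no twin}, which gives $\beta(G[H])=n\beta_2(H)$ for every graph $H$.

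For step (ii), I would simply substitute. Proposition~\ref{B_2(P_m),B_2(C_m)} gives $\beta_2(P_m)=\beta_2(C_m)=\lfloor(2m+2)/5\rfloor$, and Proposition~\ref{B2(H)=B2(overlineH)} transfers this to $\overline P_m$ and $\overline C_m$, yielding $\beta(G[P_m])=\beta(G[C_m])=\beta(G[\overline P_m])=\beta(G[\overline C_m])=n\lfloor(2m+2)/5\rfloor$. For the multipartite case, Proposition~\ref{B_2 multipartite} supplies $\beta_2(K_{m_1,\ldots,m_t})$ in the two cases $r=t$ and $r\ne t$, and Proposition~\ref{B2(H)=B2(overlineH)} again transfers the value to $\overline K_{m_1,\ldots,m_t}$. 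Multiplying each of these $\beta_2(H)$ by $n$ produces the four displayed formulas.

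There is essentially no obstacle here: this corollary is a direct packaging of Corollary~\ref{no twin} together with the adjacency-dimension results proved earlier. The only point of care is that the stated ranges $n\ge 4$ for paths and $n\ge 5$ for cycles are sharp for twin-freeness, since the excluded configurations $P_2,P_3,C_3,C_4$ do contain twin pairs and would instead fall under Theorems~\ref{B1 B2}--\ref{nB2+b(G) N(G)}.
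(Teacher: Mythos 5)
Your proposal is correct and follows exactly the route the paper takes: verify that $P_n$ ($n\geq4$) and $C_n$ ($n\geq5$) are twin-free, invoke Corollary~\ref{no twin} to get $\beta(G[H])=n\beta_2(H)$, and substitute the values of $\beta_2$ from Propositions~\ref{B2(H)=B2(overlineH)}, \ref{B_2(P_m),B_2(C_m)}, and \ref{B_2 multipartite}. The only difference is that you spell out the twin-freeness check, which the paper simply asserts.
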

\begin{cor}\label{H=K_m1,m2,..,mt} Let $H=K_{m_1,m_2,\ldots,m_t}$, where $m_1,
m_2,\ldots,m_r$ are at least $2$, $m_{r+1}=\cdots=m_t=1$, and
$\sum_{i=1}^tm_i=m$. Thus, for each adjacency basis  of $H$ there
is no vertex of $H$ with adjacency representation entirely $2$.
\par If $r=t$, then  for each adjacency basis  of $H$ there is no
vertex of $H$ with adjacency representation entirely $1$.
Therefore, by Theorem~\ref{B1 B2}, $\beta(G[H])=n\beta_2(H)$ for
each connected graph $G$ of order $n$.  If $r\neq t$, then for
each adjacency basis  of $H$, there exist a vertex with adjacency
representation entirely~$1$. Thus, by Theorem~\ref{nB2+a(G)
K(G)}, $\beta(G[H])=n\beta_2(H)+a(G)-\iota_{_K}(G)$ for each
connected graph $G$ of order $n$.
\par In particular, if $G=K_n$,
then all vertices of $K_n$ are adjacent twins. Thus, $a(K_n)=n$
and $\iota_{_K}(K_n)=1$, hence, $\beta(K_n[H])=n\beta_2(H)+n-1$.
Therefore, by Proposition~\ref{B_2 multipartite},
$$\beta(K_n[H])=\left\{
\begin{array}{ll}
n(m-r)-1 &  ~{\rm if}~r\neq t, \\
n(m-r) &  ~{\rm if}~r=t. \end{array}\right.$$
\end{cor}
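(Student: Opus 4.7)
The plan is to proceed exactly as suggested by the corollary's own outline: first determine the structure of the adjacency bases of $H=K_{m_1,\ldots,m_t}$ via the twin argument, then verify the hypotheses of the appropriate theorem from Section~\ref{Lexicographic product}, and finally specialize to $G=K_n$.

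The first step is a rigidity analysis of minimum adjacency bases of $H$. Vertices lying in the same partite set $M_i$ are non-adjacent twins of $H$, so Observation~\ref{twins} (which applies verbatim to adjacency bases, since the original proof only uses that twins have identical coordinates to outside vertices) forces every adjacency basis $W$ to contain at least $m_i-1$ vertices from each $M_i$. Combining this with the values $\beta_2(H)=m-r-1$ or $m-r$ given by Proposition~\ref{B_2 multipartite} pins down the structure of $W$: when $r=t$, the set $W$ is obtained by deleting exactly one vertex from each of the $t$ parts; when $r\neq t$, $W$ contains $m_i-1$ vertices from each $M_i$ with $i\leq r$ together with all but one of the $t-r$ singleton vertices, since two singletons outside $W$ would be indistinguishable (each being adjacent to every vertex of $W$).

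Next, I translate this structure into statements about adjacency representations. A vertex $v\in M_i$ satisfies $r_2(v|W)$ entirely $2$ only when $W\subseteq M_i$, which is ruled out by the counting of Step~1 as soon as $W$ meets more than one part. For $r=t$, the excluded vertex $x_i\in M_i$ satisfies $a_H(x_i,w)=2$ for each of the $m_i-1\geq 1$ elements $w\in W\cap M_i$, so its representation is also not entirely $1$; Theorem~\ref{B1 B2} (with $W_1=W_2=W$) then gives $\beta(G[H])=n\beta_2(H)$. For $r\neq t$, the unique singleton $c_0$ outside $W$ is adjacent to every other vertex of $H$, hence $r_2(c_0|W)$ is entirely $1$, which gives hypothesis~(i) of Theorem~\ref{nB2+a(G) K(G)}; hypothesis~(ii) follows from the absence of all-$2$ representations, and the theorem yields $\beta(G[H])=n\beta_2(H)+a(G)-\iota_{_K}(G)$.

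The specialization to $G=K_n$ is then an immediate substitution: all vertices of $K_n$ are mutually adjacent twins, so $a(K_n)=n$ and $\iota_{_K}(K_n)=1$. Inserting these together with the values of $\beta_2(H)$ from Proposition~\ref{B_2 multipartite} yields $\beta(K_n[H])=n(m-r)$ when $r=t$ and $\beta(K_n[H])=n(m-r)-1$ when $r\neq t$. The main obstacle is the rigidity argument of Step~1 in the case $r\neq t$: one must delicately control how the slack $|W|-(m-t)=t-r-1$ in the cardinality of $W$ is distributed among the parts, and show that putting all the slack inside a single part (so that $W$ could be forced to lie within one $M_i$) would leave several vertices of some other part with identical representations. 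Once this rigidity is in place, verification of the remaining hypotheses and the arithmetic of the final substitution are routine.
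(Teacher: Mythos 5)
Your proposal follows the same route as the paper: the corollary carries its own justification in-line, and what you add --- the twin-count rigidity showing that any adjacency basis of $K_{m_1,\ldots,m_t}$ must take $m_i-1$ vertices from each part of size at least $2$ and all but one of the singletons --- is exactly the argument the paper leaves implicit, and you execute it correctly (note that your closing worry about ``distributing the slack'' is moot: the per-class lower bounds already sum to $\beta_2(H)$, so the distribution is forced).

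One step, however, does not hold as stated, and it is a gap you share with the paper. You claim that $r_2(v|W)$ entirely $2$ requires $W\subseteq M_i$, ``which is ruled out by the counting of Step~1 as soon as $W$ meets more than one part.'' The counting does \emph{not} guarantee that $W$ meets more than one part when $r=1$ and $t=2$, i.e.\ when $H=K_{m_1,1}$ is a star: there the forced structure is exactly $W=M_1\setminus\{x\}$, which lies entirely inside $M_1$, and the excluded vertex $x$ has adjacency representation entirely $2$ (while the center has representation entirely $1$). So for the star, \emph{every} adjacency basis admits both an all-$1$ and an all-$2$ vertex, hypothesis (ii) of Theorem~\ref{nB2+a(G) K(G)} fails, and Theorem~\ref{thm generalG[H]} applies instead, giving $\beta(G[H])=n(\beta_2(H)+1)-\iota(G)$; this differs from the corollary's formula whenever $G$ has non-adjacent twins (e.g.\ $G=C_4$, $H=K_{2,1}$). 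For all other parameter choices ($r\geq 2$, or $r\leq 1$ with $t\geq 3$, or $r=t\geq 2$) your argument is sound, since $W$ then necessarily meets at least two parts. You should either exclude the star explicitly or handle it via Theorem~\ref{thm generalG[H]}; the final $K_n$ substitution happens to be unaffected because $n-\iota(K_n)=a(K_n)-\iota_{_K}(K_n)$.
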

%%%%%%%%%%%%%%%%%%%%%%%%%%%%%%%%%%%%%%%%%%%%%%%%%%%%%%%%%%%%%%%%%%%%%%%%%%%%%%%%%%%%%%%%%%%%%%%%%%%%%%%%%
%%%%%%%%%%%%%%%%%%%%%%%%%%%%%%%%%%%%%%%%%%%%%%%%%%%%%%%%%%%%%%%%%%%%%%%%%%%%%%%%%%%%%%%%%%%%%%%%%%%%%%%%%%%%%%
%%%%%%%%%%%%%%%%%%%%%%%%%%%%%%%%%%%%%%%%%%%%%%%%%%%%%%%%%%%%%%%%%%%%%%%%%%%%%%%%%%%%%%%%%%%%%%%%%%%%%%%%

\end{document}